\theoremstyle{plain}
\newtheorem*{ThmHTtensor}{Theorem 2.3.2}
\newtheorem*{ThmHTschur}{Theorem 2.4.2}
\newtheorem*{Thmssttensor}{Theorem 3.2.1}
\newtheorem*{Thmsstschur}{Theorem 3.3.2}
\newtheorem{Thm}[subsubsection]{Theorem}
\newtheorem{Prop}[subsubsection]{Proposition}
\newtheorem{Lem}[subsubsection]{Lemma}
\newtheorem{Cor}[subsubsection]{Corollary}
\theoremstyle{definition}
\theoremstyle{remark}
\numberwithin{equation}{subsection}
\newcommand{\ol}{\overline}
\newcommand{\mbf}{\mathbf}
\newcommand{\mcl}{\mathcal}
\newcommand{\mfk}{\mathfrak}
\newcommand{\liso}{\overset\sim\gets}
\newcommand{\riso}{\overset\sim\to}
\newcommand{\Z}{\mathbf{Z}}
\newcommand{\Zp}{\mathbf{Z}_{p}}
\newcommand{\Qp}{\mathbf{Q}_{p}}
\newcommand{\Qpbar}{\overline{\mathbf{Q}}_{p}}
\newcommand{\Cp}{\mathbf{C}_{p}}
\newcommand{\CpE}[1]{\mathbf{C}_{p,#1}}
\newcommand{\B}{\mathbf{B}}
\newcommand{\Bcris}{\mathbf{B}_{\cris}}
\newcommand{\BcrisE}[1]{\mathbf{B}_{\cris,#1}}
\newcommand{\Be}{\mathbf{B}_{\e}}
\newcommand{\BeE}[1]{\mathbf{B}_{\e,#1}}
\newcommand{\Bst}{\mathbf{B}_{\st}}
\newcommand{\BstE}[1]{\mathbf{B}_{\st,#1}}
\newcommand{\BdRp}{\mathbf{B}_{\dR}^+}
\newcommand{\BdRpE}[1]{\mathbf{B}_{\dR,#1}^+}
\newcommand{\BdR}{\mathbf{B}_{\dR}}
\newcommand{\BdRE}[1]{\mathbf{B}_{\dR,#1}}
\DeclareMathOperator{\cris}{cris}
\DeclareMathOperator{\e}{e}
\DeclareMathOperator{\st}{st}
\DeclareMathOperator{\dR}{dR}
\DeclareMathOperator{\sen}{sen}
\DeclareMathOperator{\poids}{Wt}
\DeclareMathOperator{\drpoids}{\poids_{\dR}}
\DeclareMathOperator{\Schur}{Schur}
\DeclareMathOperator{\Sym}{Sym}
\DeclareMathOperator{\Gal}{Gal} 
\DeclareMathOperator{\GL}{GL} 
\DeclareMathOperator{\Mat}{Mat} 
\DeclareMathOperator{\Id}{Id}
\DeclareMathOperator{\rank}{rank} 
\DeclareMathOperator{\nr}{nr} 
\DeclareMathOperator{\dash}{-}
\renewcommand{\epsilon}{\varepsilon}
\title{On admissible tensor products in $p$-adic Hodge theory}
\author{Giovanni Di Matteo}
\email{giovanni.di.matteo@ens-lyon.fr}
\address{UMPA ENS de Lyon, UMR 5669 du CNRS, Universit\'e de Lyon}
\date{\today}
\begin{document}

\begin{abstract}
We prove that if $W$ and $W'$ are two $B$-pairs whose tensor product is crystalline (or semi-stable or de Rham or Hodge-Tate), then there exists a character $\mu$ such that $W(\mu^{-1})$ and $W'(\mu)$ are crystalline (or semi-stable or de Rham or Hodge-Tate).  We also prove that if $W$ is a $B$-pair and $F$ is a Schur functor (for example $\Sym^n(-)$ or $\Lambda^n(-)$) such that $F(W)$ is crystalline (or semi-stable or de Rham or Hodge-Tate) and if the rank of $W$ is sufficiently large, then there is a character $\mu$ such that $W(\mu^{-1})$ is crystalline (or semi-stable or de Rham or Hodge-Tate).  In particular, these results apply to $p$-adic representations.
\end{abstract}

\maketitle

\setlength{\baselineskip}{18pt}
\section*{Introduction} 

Let $K$ and $E$ be two finite extensions of $\Qp$ and let $G_K=\Gal(\Qpbar/K)$. Fontaine has defined the notions of crystalline, semi-stable and de Rham $E$-linear representations of $G_K$ and proved that the corresponding categories are stable under the usual operations (sub-quotient, direct sum and tensor product).  The goal of this note is to answer the following question: if $V$ and $V'$ are two $p$-adic representations whose tensor product is crystalline (or semi-stable or de Rham or Hodge-Tate), then what can be said about $V$ and $V'$?

Berger has defined the tensor category of $B^{\otimes E}_{|K}$-pairs, in which the objects are couples $W=(W_{\e},W_{\dR}^+)$ such that $W_{\e}$ is a $\Be\otimes_{\Qp} E$-representation of $G_K$ and $W_{\dR}^+$ is a $\BdRp\otimes_{\Qp} E$-lattice of $W_{\dR}=(\BdR\otimes_{\Qp} E)\otimes_{(\BdRp\otimes_{\Qp} E)} W_{\e}$.  If $W=(W_{\e},W_{\dR}^+)$ is a  $B^{\otimes E}_{|K}$-pair, then we define the rank of $W$ to be $\rank_{(\Be\otimes_{\Qp} E)}W_{\e}=\rank_{(\BdRp\otimes_{\Qp} E)}W_{\dR}^+$. If $E/\Qp$ is a finite extension and if $V$ is an $E$-linear representation of $G_K$, then $W(V)=((\Be\otimes_{\Qp} E)\otimes_E V, (\BdRp\otimes_{\Qp} E)\otimes_E V)$ is a $B^{\otimes E}_{|K}$-pair, and the functor $W(-)$ identifies the category of $E$-linear representations of $G_K$ with a tensor subcategory of the category of $B^{\otimes E}_{|K}$-pairs. The notions of crystalline, semi-stable, de Rham, and Hodge-Tate objects may be extended in a natural way to objects in the category of $B^{\otimes E}_{|K}$-pairs in such a way that an $E$-linear representation $V$ of $G_K$ is crystalline (or semi-stable or de Rham or Hodge-Tate) if and only if the associated $B^{\otimes E}_{|K}$-pair $W(V)$ is.  

Using Fontaine's theory of $\BdR$-representations (see \cite{Fon04}), we can show the following result.

\begin{ThmHTtensor}
Let $W$ and $W'$ be $B^{\otimes E}_{|K}$-pairs.  If the $B^{\otimes E}_{|K}$-pair $W\otimes W'$ is Hodge-Tate, then there is a finite extension $F/E$ and a character $\mu:G_K\to F^{\times}$ such that the $B^{\otimes F}_{|K}$-pairs $W(\mu^{-1})$ and $W'(\mu)$ are Hodge-Tate.  If, moreover, $W\otimes W'$ is de Rham, then so are $W(\mu^{-1})$ and $W'(\mu)$.
\end{ThmHTtensor}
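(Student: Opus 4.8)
The plan is to reduce the Hodge-Tate case to a statement about Sen operators (or Hodge-Tate weights) and the de Rham case to Fontaine's theory of $\BdR$-representations, in both cases isolating a single "diagonal twist" that splits the tensor product.

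The plan is to reduce the Hodge-Tate statement to linear algebra with Sen operators, and the de Rham refinement to a d\'evissage using Fontaine's theory of $\BdR$-representations. To a $\BEK$-pair $W$ one attaches, applying Sen's theory to $W_{\dR}$, a finite free $K_\infty\otimes_{\Qp}E$-module $D_{\sen}(W)$ with its Sen endomorphism $\Theta_W$; after a finite base change splitting $K_\infty\otimes_{\Qp}E$, the eigenvalues of $\Theta_W$ are the generalized Hodge-Tate weights of $W$. I will use three standard facts: $W$ is Hodge-Tate if and only if $\Theta_W$ is semisimple with integer eigenvalues; $\Theta$ is additive for tensor products, $\Theta_{W\otimes W'}=\Theta_W\otimes1+1\otimes\Theta_{W'}$; and $\Theta_{W(\eta)}=\Theta_W+\poids(\eta)$ for a character $\eta$, where $\poids(\eta)$ is its Hodge-Tate-Sen weight.

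First comes the linear algebra. Writing Jordan decompositions $\Theta_W=S+N$ and $\Theta_{W'}=S'+N'$, the operator $(S\otimes1+1\otimes S')+(N\otimes1+1\otimes N')$ is the Jordan decomposition of $\Theta_{W\otimes W'}$; so semisimplicity of $\Theta_{W\otimes W'}$ forces $N\otimes1+1\otimes N'=0$, and since $N$ and $N'$ act on separate tensor factors this gives $N=N'=0$, so $\Theta_W$ and $\Theta_{W'}$ are both semisimple. Next, integrality of the eigenvalues of $\Theta_{W\otimes W'}$ means $a_i+b_j\in\Z$ for all eigenvalues $a_i$ of $\Theta_W$ and $b_j$ of $\Theta_{W'}$; subtracting two such relations shows all $a_i$ are congruent modulo $\Z$, so, setting $\alpha:=a_1$, every $a_i$ lies in $\alpha+\Z$ and every $b_j$ lies in $-\alpha+\Z$ (working embedding by embedding over $E$, $\alpha$ is really a tuple indexed by the embeddings of $K$, lying in a finite extension of $E$ since $\Theta_W$ has characteristic polynomial with coefficients in $K\otimes_{\Qp}E$). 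Hence $\Theta_W-\alpha$ and $\Theta_{W'}+\alpha$ are semisimple with integer eigenvalues.

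Now I produce the twisting character. It suffices to find a finite extension $F/E$ and a character $\mu:G_K\to F^\times$ with $\poids(\mu)\equiv\alpha\pmod\Z$: then $\Theta_{W(\mu^{-1})}=\Theta_W-\poids(\mu)$ and $\Theta_{W'(\mu)}=\Theta_{W'}+\poids(\mu)$ are still semisimple with integer eigenvalues, so $W(\mu^{-1})$ and $W'(\mu)$ are Hodge-Tate. Such a $\mu$ exists by a standard construction: by local class field theory, continuous characters of $G_K^{\mathrm{ab}}$ correspond to continuous characters of $K^\times$, and, composing with Lubin-Tate characters or directly via the $p$-adic logarithm on the pro-$p$ part of $\mathcal O_K^\times$, one realizes an $F$-valued character with any prescribed Hodge-Tate-Sen weight given by a $\Qp$-linear map $K\to F$; the tuple $\alpha$ is of this form after enlarging $F$, and integer shifts are absorbed by twisting with a power of the cyclotomic character.

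For the de Rham refinement, assume in addition that $W\otimes W'$ is de Rham and keep the $\mu$ just found, so that $W(\mu^{-1})$ and $W'(\mu)$ are Hodge-Tate and $W(\mu^{-1})\otimes W'(\mu)=W\otimes W'$ is de Rham. Because they are Hodge-Tate, $W(\mu^{-1})_{\dR}$ and $W'(\mu)_{\dR}$ decompose into their pure-Hodge-Tate-weight parts --- the obstruction to such a splitting lying in $H^1(G_K,\Cp(n))$ with $n\neq0$, which vanishes by Tate --- and, after Tate-twisting each pure piece to weight $0$ and using that de Rham-ness is stable under direct sums, subquotients and Tate twists, the statement reduces to: if $A$ and $A'$ are pure-weight-$0$ Hodge-Tate $\BdR$-representations with $A\otimes A'$ de Rham, then $A$ (and by symmetry $A'$) is de Rham, from which the de Rham-ness of $W(\mu^{-1})$ and $W'(\mu)$ follows. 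Here I would invoke the structure theory of \cite{Fon04}: the failure of such an $A$ to be de Rham is measured by a nilpotent operator which, like the Sen operator, is additive under tensor products, so de Rham-ness of $A\otimes A'$ forces this operator to vanish for $A$ by the same commuting-nilpotents argument as above. Isolating this operator and checking its additivity against Fontaine's classification of $\BdR$-representations is, I expect, the main obstacle in the whole argument; everything else is formal manipulation of Sen operators.
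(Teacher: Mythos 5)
Your overall strategy coincides with the paper's: use Sen theory to see that the weights of $W$ are all congruent modulo $\Z$ to a single $\alpha$ (and those of $W'$ to $-\alpha$), build a character with prescribed Sen weights out of Lubin--Tate characters and $p$-adic logarithms (this is exactly the paper's lemma \ref{charwts}), and then reduce to the assertion that two representations with \emph{integer} weights whose tensor product is Hodge--Tate (resp.\ de Rham) are themselves Hodge--Tate (resp.\ de Rham). For the Hodge--Tate half your route through the Jordan--Chevalley decomposition of $\Theta_{W\otimes W'}=\Theta_W\otimes 1+1\otimes\Theta_{W'}$ is a legitimate and rather clean variant: the paper instead invokes Fontaine's classification of $\Cp$-representations with integer weights into indecomposables $\Cp[i;d]$ and rules out $d>0$ by an explicit cocycle computation, but your observation that commuting nilpotents $N\otimes 1+1\otimes N'=0$ forces $N=N'=0$ does the same job. (You should still be a little careful that all of this is carried out factor-by-factor over the product of fields $K_\infty\otimes_{\Qp}E$, and that one restricts to $G_F$ to split the coefficients, as the paper does; these are routine points.)

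The genuine gap is the one you flag yourself, in the de Rham half: you assert that ``the failure of such an $A$ to be de Rham is measured by a nilpotent operator which \dots is additive under tensor products,'' and the whole de Rham statement rests on this. That claim is true, but it is not something you can wave at: it is precisely the content of the paper's lemma \ref{dRtensorlem1}, and its proof is the only non-formal step in the argument. Concretely, Fontaine's theorem \cite[3.19]{Fon04} writes an integer-weight $\BdR$-representation as a sum of indecomposables $\BdR[\{0\};d]=\BdR\otimes_{\Zp}\Zp(0;d)$, where $\Zp(0;d)$ consists of polynomials in $X=\log t$ with $g(X)=X+\log\chi(g)$; one must then show that $\BdR[\{0\};d]\otimes_{\BdR}\BdR[\{0\};d']$ is \emph{not} de Rham when $d>0$ (equivalently, that the tensor product of the corresponding nilpotent Jordan blocks is $N\otimes 1+1\otimes N'$ and that de Rham means this vanishes). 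The paper does this by testing the element $X\otimes 1$ against a basis of $D_{\dR}$ of the tensor product: if it were de Rham one would get $b_1\in\BdR$ with $g(b_1)-b_1=\log\chi(g)$ for all $g\in G_K$, which is impossible because $g\mapsto\log\chi(g)$ generates the one-dimensional $H^1(G_K,\Cp)$ and $\Cp(h)^{G_K}=0$ for $h\neq 0$. Until you supply this computation (or an equivalent verification that Fontaine's equivalence with ``$K$-vector spaces plus nilpotent operator'' is a \emph{tensor} equivalence under which de Rham corresponds to the operator being zero), the de Rham assertion of the theorem is not proved. A smaller imprecision in the same passage: Tate twists act trivially on $\BdR$-representations (as $t$ is a unit in $\BdR$), so ``Tate-twisting each pure piece to weight $0$'' is not the right reduction; the correct one is that integer de Rham weights already place you in the single-orbit case of Fontaine's classification.
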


It is known that every de Rham $B^{\otimes E}_{|K}$-pair is potentially semi-stable, due to the results of \cite{And02}, \cite{Ber02}, \cite{Ked00}, and \cite{Meb02}. The properties of $(\varphi,N,\Gal(L/K))$-modules allow us to understand the situation when $W$ and $W'$ are both potentially semi-stable.

\begin{Thmssttensor}
Let $W$ and $W'$ be $B_{|K}^{\otimes E}$-pairs which are potentially semi-stable.  If the $B_{|K}^{\otimes E}$-pair $W\otimes W'$ is semi-stable, then there is a finite extension $F/E$ and a character $\mu:G_K\to F^{\times}$ such that the $B^{\otimes F}_{|K}$-pairs $W(\mu^{-1})$ and $W'(\mu)$ are semi-stable. If, moreover, $W\otimes W'$ is crystalline, then so are $W(\mu^{-1})$ and $W'(\mu)$.
\end{Thmssttensor}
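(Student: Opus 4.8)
The plan is to reduce the statement about potentially semi-stable $B$-pairs to a statement about linear algebra with $(\varphi, N, \Gal(L/K))$-modules, exploiting the equivalence of categories between potentially semi-stable $B$-pairs and filtered $(\varphi, N, \Gal(L/K))$-modules. Concretely, choose a finite Galois extension $L/K$ such that both $W$ and $W'$ become semi-stable over $L$, and let $D = D_{\st}^{L}(W)$ and $D' = D_{\st}^{L}(W')$ be the associated filtered $(\varphi, N, \Gal(L/K))$-modules; these are finite-dimensional $L_0 \otimes_{\Qp} E$-modules equipped with a Frobenius $\varphi$, a monodromy operator $N$, a descent datum for $\Gal(L/K)$, and a filtration on $D_L = L \otimes_{L_0} D$. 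The hypothesis that $W \otimes W'$ is semi-stable translates into the condition that the $\Gal(L/K)$-action on $D \otimes D'$ (suitably interpreted) is already defined over $K$ in the appropriate sense — i.e. the descent datum on the tensor product is trivial on $\Gal(L/K)$ up to what is forced — and similarly the crystalline hypothesis says $N = 0$ on $D \otimes D'$, which forces $N = 0$ on $D$ and $N = 0$ on $D'$ separately since $N$ acts on a tensor product as $N \otimes 1 + 1 \otimes N$ and a sum of two nilpotent operators acting on tensor factors vanishes only if each vanishes (using that the modules are nonzero and faithfully flat). So the crystalline refinement at the end will be essentially automatic once the semi-stable case is established.

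For the main semi-stable assertion, I would first treat the action of $\Gal(L/K)$. The descent datum gives, for each $g \in \Gal(L/K)$, a semilinear automorphism of $D$; the obstruction to $W$ itself being semi-stable over $K$ is precisely that this cocycle is nontrivial. The key observation is that on $D \otimes_{L_0 \otimes E} D'$ the combined cocycle is trivial, so the cocycle of $W'$ is (up to a scalar-valued cocycle) the inverse of the cocycle of $W$. One extracts from this a one-dimensional piece: since $\Gal(L/K)$ is finite, the determinant of the cocycle on $D$, or more precisely a suitable rank-one sub-object or quotient carrying the "difference" of the two descent data, defines a character of $\Gal(L/K)$ valued in $(L_0 \otimes_{\Qp} E)^{\times}$ whose restriction to the inertia-type data measures the failure. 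After possibly enlarging $E$ to a finite extension $F$ so that this character factors through an abelian quotient and can be realized by a genuine crystalline (even unramified-twisted-by-de-Rham) character $\mu : G_K \to F^{\times}$ — here one uses that $\Gal(L/K)$-modules of rank one over $L_0 \otimes F$ with compatible filtration correspond to such characters — one checks that twisting $W$ by $\mu^{-1}$ kills the descent datum of $W$, hence $W(\mu^{-1})$ is semi-stable, and then $W'(\mu) = (W \otimes W')(\mu) \otimes W(\mu^{-1})^{\vee}$-type manipulations, or more directly the fact that the cocycle of $W'(\mu)$ is now also trivial, shows $W'(\mu)$ is semi-stable.

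The main obstacle I anticipate is the bookkeeping around the coefficient field $E$ and the precise nature of the character $\mu$: one must ensure that the "difference character" of $\Gal(L/K)$ actually lifts to a character of $G_K$ that is itself semi-stable (so that twisting preserves the category), and this is why the extension $F/E$ is needed — a character of the finite group $\Gal(L/K)$ valued in $\overline{E}^{\times}$ lands in $F^{\times}$ for some finite $F/E$, and an abelian character of $G_K$ of finite image composed with the cyclotomic-type adjustments is automatically semi-stable. A secondary subtlety is matching the filtrations: twisting by $\mu$ shifts the filtration on $D$, and one must verify that the filtration on $W(\mu^{-1})$ becomes the one coming from a $K$-rational structure; this follows because the filtration jumps of $W \otimes W'$ being de Rham over $K$ (which semi-stable implies) pins down, together with the filtration data of $W'$, the filtration data of $W$ up to the shift provided by $\mu$. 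I expect the filtration compatibility to be the most delicate computation, but it is formal once the $(\varphi, N, \Gal(L/K))$-module dictionary is set up, and the Hodge-Tate/de Rham version in Theorem 2.3.2 can be invoked to handle the de Rham descent of the filtration directly.
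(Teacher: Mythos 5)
Your overall architecture matches the paper's: pass to $D_{\st,L}(W)$ and $D_{\st,L}(W')$ over a Galois extension $L/K$ where both pairs become semi-stable, use $D_{\st,L}(W\otimes W')\simeq D_{\st,L}(W)\otimes D_{\st,L}(W')$, extract a character measuring the obstruction, enlarge $E$ to $F$ so the character exists, and twist. The crystalline refinement via $N\otimes\Id+\Id\otimes N'=0$ is also the paper's argument (both operators are forced to be scalar, and a nilpotent scalar in the reduced ring $L_0\otimes_{\Qp}F$ is zero). However, there is a genuine gap at the heart of the semi-stable case. The criterion (the paper's proposition 3.1.1) is that $W$ is semi-stable over $K$ iff the \emph{inertia} subgroup $I_{L/K}$ acts trivially on $D_{\st,L}(W)$; so the hypothesis gives $U_g\otimes U'_g=\Id$ for $g\in I_{L/K}$, where $U_g,U'_g$ are the matrices of $g$ on the two factors. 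The indispensable observation, which your proposal never states, is that $A\otimes B=\Id$ forces \emph{both} $A$ and $B$ to be scalar: $U_g=\eta_g\Id$, $U'_g=\eta_g^{-1}\Id$. Your substitutes --- ``the determinant of the cocycle'' or ``a suitable rank-one sub-object carrying the difference'' --- do not deliver this: the determinant only gives $\eta_g^{d}$, and extracting a $d$-th root recovers $\eta$ only up to a character of order dividing $d$, which is not enough to make $I_{L/K}$ act trivially after the twist unless you already know $U_g$ is scalar. You also omit the step showing $\eta_g$ lands in $E^\times$ rather than merely $(L_0\otimes_{\Qp}E)^\times$, which the paper gets from $\varphi g=g\varphi$ and $(L_0\otimes_{\Qp}E)^{\sigma=1}=E$, and which is what makes the extension of $\eta$ from $I_{L/K}$ to a genuine character $\mu:\Gal(L/K)\to F^\times$ (adjoining an $f$-th root of $\eta(\omega^f)$) work cleanly.

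Two further points are off. First, your claim that $\mu$ ``can be realized by a genuine crystalline character'' is backwards: $\mu|_{I_{L/K}}=\eta$ is precisely the nontrivial ramified obstruction, so $\mu$ has finite ramified image and is \emph{not} crystalline; if it were, twisting by $\mu^{-1}$ would not change semi-stability at all. Second, the filtration compatibility you flag as ``the most delicate computation'' does not arise in this framework: for $B$-pairs, semi-stability is detected entirely by the inertia action and $N$ on $D_{\st,L}$ (no weak admissibility or filtration condition enters), the filtration being carried by $W_{\dR}^+$ as part of the data. So that worry is moot, while the scalarity lemma --- the actual crux --- is the piece your write-up is missing.
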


In particular, the above two theorems may be used to deduce analogous results for $p$-adic representations (see corollaries \ref{dRtensorreps} and \ref{ssttensorreps}).

The same methods used to prove theorems \ref{dRtensor} and \ref{ssttensors} above may be used to understand the situation when the image of a $B$-pair by a Schur functor is crystalline (or semi-stable or de Rham or Hodge-Tate). An integer partition $u=(u_1,\ldots,u_r)\in\mathbf{N}_{>0}^r$ with $u_1\geq\ldots \geq u_r$ of an integer $n$ gives rise to the Schur functor $\Schur^u(-)$, which sends $B^{\otimes E}_{|K}$-pairs to $B^{\otimes E}_{|K}$-pairs. If $u_1=u_2=\ldots =u_r$, then we put $r(u) = r+1$, and we put $r(u)=r$ when this is not the case.

\begin{ThmHTschur}
Let $W$ be a $B^{\otimes E}_{|K}$-pair such that $\rank(W)\geq r(u)$.  If $\Schur^u(W)$ is Hodge-Tate, then there is a finite extension $F/E$ and a character $\mu:G_K\to F^\times$ such that the $B^{\otimes F}_{|K}$-pair $W(\mu^{-1})$ is Hodge-Tate.  If, moreover, $\Schur^u(W)$ is de Rham, then $W(\mu^{-1})$ is de Rham.
\end{ThmHTschur}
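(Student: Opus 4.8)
The plan is to follow the proof of Theorem~\ref{dRtensor}, using Fontaine's theory of $\BdR$-representations \cite{Fon04} to reduce both assertions to linear algebra in characteristic zero. To a $B^{\otimes E}_{|K}$-pair $W$ one attaches, after a harmless extension of scalars, a vector space $D_W$ of dimension $\rank(W)$ carrying two functorial endomorphisms: the Sen operator $\Theta_W$ of the $\Cp$-representation associated to $W$, and the Sen--Fontaine operator $\nabla_W$ of the $\BdR$-representation $W_\dR$. Both are compatible with tensor products, duals and subquotients, hence with Schur functors: viewing $\Schur^u$ as a representation of $\GL(D_W)$, the value at $\Theta_W$ (resp. at $\nabla_W$) of the induced representation of its Lie algebra, which I denote $d\Schur^u(\Theta_W)$ (resp. $d\Schur^u(\nabla_W)$), is the Sen operator (resp. the Sen--Fontaine operator) of $\Schur^u(W)$. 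Moreover $W$ is Hodge-Tate if and only if $\Theta_W$ is semisimple with all eigenvalues in $\Z$; $W$ is de Rham if and only if $W_\dR$ is a trivial $\BdR$-representation, which by Fontaine's theory happens if and only if $\nabla_W=0$; twisting $W$ by a character $\mu$ shifts both $\Theta_W$ and $\nabla_W$ by the scalar operator $\Theta_\mu\cdot\Id$; and the eigenvalues of $\Theta_W$, the generalized Hodge-Tate weights of $W$, are always $p$-adic integers.

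The technical heart, which I expect to be the main obstacle, is an elementary but genuinely combinatorial lemma about $\Schur^u$ as a representation of $\GL_d$ with $d\ge r(u)$: \textup{(i)} if $N$ is nilpotent and $d\Schur^u(N)=0$ then $N=0$; \textup{(ii)} if $S$ is semisimple with eigenvalues $\lambda_1,\dots,\lambda_d$ and all eigenvalues of $d\Schur^u(S)$ lie in $\Z$, then $\lambda_i-\lambda_j\in\Z$ for all $i,j$ and $|u|\cdot\lambda_1\in\Z$; in particular $d\Schur^u(S)=0$ forces $S=0$. One proves (i) on a Jordan basis; for (ii) one uses that, because $d\ge r(u)$, both the highest weight $u$ and a weight $u-\alpha$ (for a suitable simple root $\alpha=e_i-e_j$) occur among the weights of $\Schur^u$ — all of which have coordinate sum $|u|$ — together with the permutation-invariance of the weight set, so that comparing $\langle u,\lambda\rangle$ with $\langle u-\alpha,\lambda\rangle$ yields the differences of the $\lambda$'s, and then $|u|\lambda_1$, as integers. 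The bound $r(u)=r+1$ when all parts of $u$ coincide is sharp: at rank $r$ the functor $\Schur^u$ is a power of the determinant, whose derivative sees only the trace and so kills every nilpotent endomorphism.

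Granting this, suppose $\Schur^u(W)$ is Hodge-Tate and $d=\rank(W)\ge r(u)$. Decompose $\Theta_W=S+N$ into its commuting semisimple and nilpotent parts; then $d\Schur^u(S)+d\Schur^u(N)$ is the Jordan decomposition of the Sen operator of $\Schur^u(W)$, which is semisimple by hypothesis, so $d\Schur^u(N)=0$ and hence $N=0$ by (i). Thus $\Theta_W=S$ is semisimple, and by (ii) its eigenvalues satisfy $\lambda_i-\lambda_j\in\Z$ and $|u|\lambda_1\in\Z$; since $\lambda_1$ is moreover a $p$-adic integer, $\lambda_1\in\Zp$. Hence there is a finite extension $F/E$ and a character $\mu\colon G_K\to F^\times$ — a suitable $p$-adic power of the cyclotomic character — with $\Theta_\mu=\lambda_1$, so that $\Theta_{W(\mu^{-1})}=\Theta_W-\lambda_1\Id$ is semisimple with integer eigenvalues and $W(\mu^{-1})$ is Hodge-Tate. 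Now suppose in addition $\Schur^u(W)$ is de Rham. It is then Hodge-Tate, so $\mu$ is available; and since $|u|\lambda_1\in\Z$ the character $\mu^{|u|}$, an integer power of the cyclotomic character, is de Rham, whence $\Schur^u\bigl(W(\mu^{-1})\bigr)=\Schur^u(W)(\mu^{-|u|})$ is de Rham. Finally, decomposing $\nabla_{W(\mu^{-1})}$ into its semisimple and nilpotent parts and applying (i) and (ii) — legitimate since $\rank(W(\mu^{-1}))=d\ge r(u)$ and since the image of $\nabla_{W(\mu^{-1})}$ under $d\Schur^u$ is the Sen--Fontaine operator of $\Schur^u(W(\mu^{-1}))$, hence $0$ — we obtain $\nabla_{W(\mu^{-1})}=0$, so $W(\mu^{-1})$ is de Rham. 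The corresponding statements for $E$-linear representations of $G_K$ follow, as in Corollary~\ref{dRtensorreps}, from the fact that $W(-)$ identifies $\Rep$resentations with a full tensor subcategory of $B^{\otimes E}_{|K}$-pairs.
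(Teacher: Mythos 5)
Your overall strategy --- read off the Sen weights of $\Schur^u(W)$ as tableau sums, use $\rank(W)\geq r(u)$ to produce adjacent tableaux and conclude that the differences of the weights of $W$ are integers, and kill the nilpotent part of the relevant operator by a Jordan-basis computation --- is essentially the paper's, and your Jordan-decomposition packaging of the semisimplicity step is a legitimate alternative to the paper's explicit computation with Fontaine's indecomposables $\BdR[\{0\};d]$ in lemma \ref{dRschurlem1} (though you would still need to say precisely what your operator $\nabla_W$ is and why ``de Rham'' is equivalent to its vanishing, rather than to ``semisimple with eigenvalues in $\Z$'', which is how Fontaine's classification is actually stated).

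The genuine gap is in the construction of the twisting character $\mu$. First, your claim that the eigenvalues of $\Theta_W$ are always $p$-adic integers is false for $B$-pairs, and even for rank-one representations: lemma \ref{charwts} produces characters whose Sen weight is any prescribed element of $E$, including elements of negative valuation. Second, and more seriously, the Sen operator is $K\otimes_{\Qp}E$-linear, so for $K\neq\Qp$ one gets a separate eigenvalue list $a_{1,h},\dots,a_{d,h}$ for each embedding $h:K\to E$, and your argument yields $a_{i,h}-a_{1,h}\in\Z$ and $n\,a_{1,h}\in\Z$ only embedding by embedding. The character you need must have $h$-weight $a_{1,h}$ for every $h$ simultaneously, and these values genuinely vary with $h$; a power of the cyclotomic character has the same weight at every embedding and therefore cannot do the job (nor can it realize $a_{1,h}=k/n$ when $p\mid n$). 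Supplying such a character is exactly the content of the paper's lemma \ref{charwts}, which uses $p$-adic powers of Lubin-Tate characters attached to the various embeddings; it is the one non-formal input of the proof besides the tableau combinatorics, and your proposal assumes it rather than proves it. Once such a $\mu$ is granted, your remaining steps go through, provided you also decompose along the embeddings (as in paragraph \ref{coefficientsdecomposition}) before applying your linear-algebra lemma, so that you are genuinely working with operators on vector spaces over a field.
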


\begin{Thmsstschur}
Let $W$ be a potentially semi-stable $B^{\otimes E}_{|K}$-pair such that $\rank(W)\geq r(u)$.  If $\Schur^u(W)$ is semi-stable, then there is a finite extension $F/E$ and a character $\mu:G_K\to F^\times$ such that the $B^{\otimes F}_{|K}$-pair $W(\mu^{-1})$ is semi-stable.  If, moreover, $\Schur^u(W)$ is crystalline, then so is $W(\mu^{-1})$.
\end{Thmsstschur}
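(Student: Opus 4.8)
The plan is to reduce the Schur-functor statement to the tensor-product statement (Theorem 3.2.1) by an inductive/combinatorial argument on the shape of the partition $u$, exactly as one expects the proof of Theorem 2.4.2 to reduce to Theorem 2.3.2 in the Hodge–Tate/de Rham setting. First I would recall that, after a finite extension of $E$, a potentially semi-stable $B^{\otimes E}_{|K}$-pair $W$ corresponds to a $(\varphi,N,\Gal(L/K))$-module, so one may work with the associated filtered module and, in particular, the attached Sen/de Rham-type weights and the Frobenius data are defined over a finite extension. The key observation is that $\Schur^u(W)$ always occurs as a direct summand (a Schur constituent) of a suitable tensor power $W^{\otimes n}$, and more usefully, by Pieri-type rules, a tensor product $\Schur^u(W)\otimes(\det W)^{\otimes k}$ or $\Schur^u(W)\otimes W^{\otimes m}$ decomposes into Schur constituents among which one can isolate, when $\rank(W)$ is large enough compared to $r(u)$, either $W$ itself or $W\otimes(\det W)^{\otimes j}$ as a summand with multiplicity one. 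Since semi-stable (resp. crystalline) $B^{\otimes E}_{|K}$-pairs are stable under tensor product, direct sum, and passage to direct summands, semi-stability of $\Schur^u(W)$ propagates to these tensor products and hence, via Theorem 3.2.1 applied to the relevant two-factor tensor decomposition, yields a character $\mu$ with $W(\mu^{-1})$ semi-stable; the determinant twist is absorbed into $\mu$ after a further finite extension to extract the needed root.

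More concretely, the key steps, in order, are: (1) pass to a finite extension $F/E$ so that $W$ becomes semi-stable over $L$ and all the combinatorial manipulations below are available; (2) establish the representation-theoretic lemma that for $\dim \geq r(u)$ the $\GL_d$-representation $\mathbb{S}^u(\mathrm{std})\otimes(\text{something built from }\mathrm{std}\text{ and }\det)$ contains $\mathrm{std}$ (the standard representation, up to a determinant twist) as a direct summand — this is where the hypothesis $\rank(W)\geq r(u)$, with the $+1$ correction in the rectangular case $u_1=\cdots=u_r$, enters, since in the rectangular case $\Schur^u$ of a $d$-dimensional space with $d=r$ is a power of the determinant and carries no information about $W$ beyond its determinant, so one genuinely needs $d>r$; (3) translate this into $B^{\otimes F}_{|K}$-pairs: from $\Schur^u(W)$ semi-stable deduce that a two-factor tensor product $W\otimes W'$ is semi-stable, where $W'$ is an explicit pair built from $W$ (a tensor power, possibly twisted by $\det W$), by using stability of semi-stability under $\otimes$, $\oplus$, and direct summands; (4) apply Theorem 3.2.1 to get a character $\mu_0$ (over a finite extension of $F$) with $W(\mu_0^{-1})$ and $W'(\mu_0)$ semi-stable; (5) clean up the determinant twist: since $W'$ is built from $W$ and $\det W$, having both $W\otimes W'$ and $W(\mu_0^{-1})$ semi-stable forces $\det W$ (hence a suitable power) to differ from a semi-stable object by a character, and after extracting a root of that character over a further finite extension one obtains the desired $\mu$ with $W(\mu^{-1})$ semi-stable; (6) for the crystalline refinement, run the identical argument using the crystalline clause of Theorem 3.2.1 and the fact that crystalline pairs enjoy the same closure properties.

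The main obstacle I expect is step (2) together with the bookkeeping in step (5): one must choose the auxiliary pair $W'$ so that (a) it is manifestly a direct summand of (a tensor power of) $\Schur^u(W)$ or builds one from $\Schur^u(W)$ by allowed operations, so that semi-stability transfers, and simultaneously (b) the tensor decomposition of $W\otimes W'$ contains $W$ (up to an explicit determinant twist) with multiplicity one, so that Theorem 3.2.1 is applicable and actually sees $W$ rather than just $\det W$. Getting the precise combinatorial threshold $r(u)$ right — in particular the distinction between the rectangular and non-rectangular cases that motivates the definition $r(u)=r+1$ versus $r(u)=r$ — requires a careful application of the Littlewood–Richardson/Pieri rules and is the genuinely delicate point; the $p$-adic Hodge theory input, by contrast, is entirely encapsulated in Theorem 3.2.1 and the standard closure properties of the categories of semi-stable and crystalline $B^{\otimes E}_{|K}$-pairs.
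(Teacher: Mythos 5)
There is a genuine gap, and it sits exactly at your step (3). Stability of semi-stability under tensor product only lets you conclude that $X\otimes Y$ is semi-stable when \emph{both} $X$ and $Y$ are already known to be semi-stable; it does not let you propagate semi-stability from $\Schur^u(W)$ to $\Schur^u(W)\otimes W^{\otimes m}$ or to $\Schur^u(W)\otimes(\det W)^{\otimes k}$, because $W^{\otimes m}$ and $\det W$ are precisely the objects whose semi-stability is in question. Worse, your intended target is unreachable by any argument: the $B$-pairs ``$W$ itself'' and $W\otimes(\det W)^{\otimes j}$ are in general \emph{not} semi-stable under the hypotheses. Take $u=(2)$, $W=W_0(\eta)$ with $W_0$ semi-stable of rank $2$ and $\eta$ a ramified quadratic character: then $\Sym^2(W)=\Sym^2(W_0)(\eta^2)=\Sym^2(W_0)$ is semi-stable, while $W\otimes(\det W)^{\otimes j}=(W_0\otimes(\det W_0)^{\otimes j})(\eta)$ is not semi-stable for any $j$. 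The only objects whose semi-stability you can hope to deduce are those in the tensor subcategory generated by $\Schur^u(W)$ (closed under $\otimes$, $\oplus$, duals and direct summands), i.e.\ those on which the central $n$-th roots of unity of $\GL_d$ act trivially; $W\otimes(\det W)^{\otimes j}$ has central character $\lambda\mapsto\lambda^{jd+1}$, and $n\nmid jd+1$ in general. A repairable version of your plan must target $W^{\otimes n}$ instead (central character $\lambda^n$), show it is a direct summand of a sum of $\Schur^u(W)^{\otimes a}\otimes(\Schur^u(W)^{*})^{\otimes b}$ -- equivalently, that $\Schur^u$ of the standard representation is a \emph{faithful} representation of $\GL_d$ modulo its central $n$-th roots of unity once $d\geq r(u)$ -- and then apply Theorem 3.2.1 to $W\otimes W^{\otimes(n-1)}$. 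That faithfulness statement is the entire content of the argument, is where $r(u)$ genuinely enters, and is asserted but not proved (and not quite correctly formulated) in your step (2); your step (5) on ``cleaning up the determinant twist'' cannot be carried out as described.

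For comparison, the paper does not reduce to Theorem 3.2.1 at all. It chooses $L/K$ finite Galois with $W|_{G_L}$ semi-stable, uses the isomorphism $\Schur^u(D_{\st,L}(W))\simeq D_{\st,L}(\Schur^u(W))$ of $(\varphi,N,\Gal(L/K))$-modules, and proves directly (Lemma 3.3.1) that if the finite group $I_{L/K}$ acts trivially on $\Schur^u(D)$ and $\rank(D)\geq r(u)$, then it acts on $D$ by a scalar character: after decomposing $D=\bigoplus_h D_h$ over the embeddings of $L_0$ and diagonalizing each $g\in I_{L/K}$, the chain of tableaux $T_1,\ldots,T_d$ on $Y_u$ differing in a single box (which exists exactly when $d\geq r(u)$) forces all eigenvalues of $g$ to coincide. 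The resulting character of $I_{L/K}$ is extended to $\Gal(L/K)$ over a finite extension $F/E$ by the unramified-quotient argument from the proof of Theorem 3.2.1, and the crystalline refinement is obtained by the same tableau computation applied to a Jordan basis for $N$. This is the explicit, ``by hand'' form of the Tannakian argument you are gesturing at; if you wish to pursue your route you must prove the faithfulness statement and aim at $W^{\otimes n}$, not at $W$ or a determinant twist of it.
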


In particular, when $u=(n)$, then $r(u)=2$ and the associated Schur functor is $\Sym^n(-)$ and when $u=(1,\ldots,1)$, then $r(u)=n+1$ and the associated Schur functor is $\Lambda^n(-)$.  In the discussion following corollary \ref{HTschurreps}, we show that the bounds on $\rank(W)$ in theorems \ref{dRschur} and \ref{sstschur} are optimal.

It was shown by Skinner (see \S 2.4.1 of \cite{Ski09}) that if $V$ is a $p$-adic representation and if $\Sym^2(V)$ is crystalline, then Wintenberger's methods of \cite{Win95} and \cite{Win97} may be applied to show that there exists a quadratic character $\mu$ such that $V(\mu)$ is crystalline.  It is likely that Wintenberger's methods can be used in the same fashion to give another proof of our theorems \ref{dRtensor}, \ref{ssttensors}, \ref{dRschur}, and \ref{sstschur}.

\vskip 15pt
\noindent\textbf{Acknowledgements.} This note is part of my PhD under the supervision of Laurent Berger. We are grateful to Kevin Buzzard, Frank Calegari, Pierre Colmez, Brian Conrad, Tong Liu and Liang Xiao for useful correspondence and to Jean-Marc Fontaine and Jean-Pierre Wintenberger for pointing out the tannakian argument. Laurent Berger first heard about this question from Barry Mazur.

\section{Notation and generalities}

\subsection{Notation}  
Let $\Qpbar$ be an algebraic closure of $\Qp$ and let $\Cp$ be its $p$-adic completion. Let $\BdR$, $\BdRp$, $\Bcris$, and $\Bst$ denote Fontaine's rings as in \cite{Fon94a} and let $\Be=\Bcris^{\varphi=1}$.  In this note, $E/\Qp$ and $K/\Qp$ denote finite extensions. If $\B$ is any of the above rings, then $\B_E$ will denote the ring $\B\otimes_{\Qp} E$ endowed with an action of $G_K=\Gal(\Qpbar/K)$ defined by $g(b\otimes e) = g(b)\otimes e$ for all $g\in G_K$.  If $W$ is a free $\B_E$-module of finite rank endowed with a semi-linear action of $G_K$, then we refer to $W$ as a $\B_E$-representation of $G_K$.

\subsection{$B_{|K}^{\otimes E}$-pairs}
A {\em $B_{|K}^{\otimes E}$-pair} is a couple $W=(W_{\e},W_{\dR}^+)$ where $W_{\e}$ is a $\BeE{E}$-representation of $G_K$ and $W_{\dR}^+$ is a $G_K$-stable $\BdRpE{E}$-lattice of $W_{\dR}:= (\BdRE{E})\otimes_{(\BeE{E})} W_{\e}$.  We define $\rank(W)$ to be the rank of $W_{\e}$ as a $\BeE{E}$-module. If $W$ and $W'$ are $B_{|K}^{\otimes E}$-pairs, then $W\otimes W'=(W_{\e}\otimes_{\BeE{E}}W'_{\e},W^+_{\dR}\otimes_{\BdRpE{E}}W'^+_{\dR})$ is a $B_{|K}^{\otimes E}$-pair. If $F/E$ and $L/K$ are finite extensions and if $W$ is a $B^{\otimes E}_{|K}$-pair, then $F\otimes_E W|_{G_L}$ is a $B_{|L}^{\otimes F}$-pair.
If $V$ is an $E$-linear representation of $G_K$, then we let $W(V)$ denote the $B_{|K}^{\otimes E}$-pair $\left((\BeE{E})\otimes_E V, (\BdRpE{E})\otimes_E V\right)$.
The properties of $B_{|K}^{\otimes E}$-pairs are developed in \cite{Ber08}, \cite{BerCh}, and \cite{Nak09}.

\subsection{Representations with coefficients in an extension}\label{coefficientsdecomposition}
Let $F/\Qp$ be a finite extension such that $K\supset F^{\Gal}$. If $\B\in\{\Qpbar,\Cp,\BdR\}$, then the map
\begin{equation}\label{descentmap}\addtocounter{subsubsection}{1}
\begin{split}
 \B\otimes_{\Qp} F &\simeq \bigoplus_{h:F\to \Qpbar} \B\\  
 (b\otimes f) &\mapsto (b h(f))_{h}
\end{split}
\end{equation}
 (where $h$ runs over the embeddings of $F$ into $\Qpbar$) is an isomorphism of rings which commutes with the action of $G_K$.

In particular, a $\B_{F}$-representation $W$ of $G_K$ decomposes into a direct sum $W = \bigoplus_{h:F\to \Qpbar} W_h$ as a $\B$-representation of $G_K$, where $W_h$ is the sub-$\B$-representation of $\rank_{\B} W_h=\rank_{\B_F} W$ coming from the $h$-factor map $(b\otimes f)\mapsto b\cdot h(f) : \B\otimes_{\Qp} F\to \B$ of the map \ref{descentmap}.  A $\BdRE{F}$-representation $W$ of $G_K$ is de Rham if and only if the $\BdR$-representations $W_h$ are de Rham for each embedding $h:F\to \Qpbar$ and a $\CpE{F}$-representation $W$ of $G_K$ is Hodge-Tate if and only if the $\Cp$-representations $W_h$ are Hodge-Tate for all embeddings $h:F\to\Qpbar$.

\begin{Lem}\label{dRtensorlem0}
If $W$ and $W'$ are $\B_F$-representations of $G_K$ and if $W=\bigoplus_h W_h$ and $W'=\bigoplus_h W'_h$ are their decompositions as described above, then the decomposition of the $\B_F$-representation $W\otimes_{\B_F} W'$ is given by $\bigoplus_{h:F\to\Qpbar} (W_h\otimes_B W'_h)$.
\end{Lem}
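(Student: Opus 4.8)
The plan is to reduce everything to the elementary behaviour of modules over a finite product of rings, using the idempotents attached to the ring decomposition \ref{descentmap}. Write $R=\B_F$ and, for each embedding $h:F\to\Qpbar$, let $e_h\in R$ be the element corresponding under \ref{descentmap} to the tuple which is $1$ in the $h$-th copy of $\B$ and $0$ elsewhere; thus the $e_h$ are orthogonal idempotents, $\sum_h e_h=1$, and $e_hR\simeq\B$ via the $h$-factor map. Since $K\supset F^{\Gal}$, every embedding $h:F\to\Qpbar$ has image contained in $K$, so $G_K$ fixes $h(f)$ for all $f\in F$; hence the isomorphism \ref{descentmap} commutes with $G_K$ and each $e_h$ lies in $R^{G_K}$. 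For any $\B_F$-representation $M$ of $G_K$ one then has the $G_K$-stable decomposition $M=\bigoplus_h e_hM$, and by the construction recalled before the statement the summand $e_hM$, regarded as a module over $e_hR\simeq\B$ with its induced semilinear $G_K$-action, is precisely $M_h$.

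First I would expand the tensor product using this decomposition on both factors and the additivity of $\otimes_R$:
\[
W\otimes_R W'=\Bigl(\bigoplus_h e_hW\Bigr)\otimes_R\Bigl(\bigoplus_{h'}e_{h'}W'\Bigr)=\bigoplus_{h,h'}(e_hW)\otimes_R(e_{h'}W').
\]
Next I would kill the off-diagonal terms: if $h\neq h'$, $x\in e_hW$ and $y\in e_{h'}W'$, then $x\otimes y=(e_hx)\otimes y=x\otimes(e_hy)=x\otimes(e_he_{h'}y)=0$, so $(e_hW)\otimes_R(e_{h'}W')=0$. Therefore $W\otimes_R W'=\bigoplus_h(e_hW)\otimes_R(e_hW')$, and this decomposition is $G_K$-stable because the $e_h$ are $G_K$-fixed.

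It remains to identify each diagonal summand. Since the $R$-action on both $e_hW$ and $e_hW'$ factors through the quotient ring $R\to e_hR$ (the ideal $(1-e_h)R$ acting as zero), one has the standard identification $(e_hW)\otimes_R(e_hW')=(e_hW)\otimes_{e_hR}(e_hW')$, which under $e_hR\simeq\B$ is exactly $W_h\otimes_{\B}W'_h$; all these isomorphisms are $G_K$-equivariant. Comparing with the decomposition of the $\B_F$-representation $W\otimes_{\B_F}W'$ recalled before the lemma yields the claim. There is essentially no obstacle beyond the bookkeeping: the only place the hypothesis $K\supset F^{\Gal}$ is needed — and hence the only point genuinely requiring verification — is that the idempotents $e_h$ are $G_K$-invariant, so that the whole decomposition is compatible with the semilinear Galois actions.
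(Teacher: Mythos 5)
Your argument is correct: the off-diagonal terms $(e_hW)\otimes_R(e_{h'}W')$ vanish because the orthogonal idempotents $e_h$ are central and $G_K$-fixed (using $h(F)\subset F^{\Gal}\subset K$), and the diagonal terms identify with $W_h\otimes_{\B}W'_h$ via the standard base-change $(e_hW)\otimes_R(e_hW')=(e_hW)\otimes_{e_hR}(e_hW')$. The paper states this lemma without proof, treating it as an immediate consequence of the ring decomposition \ref{descentmap}; your idempotent bookkeeping is exactly the intended justification, so there is nothing to compare beyond noting that you have correctly isolated the one point that uses the hypothesis $K\supset F^{\Gal}$.
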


\subsection{Schur functors applied to $B$-pairs}\label{schurnotation}
Let $n\geq 1$ be an integer and let $n=u_1+\ldots +u_r$ be an integer partition such that $u_i\geq u_{i+1}\geq 1$ for all $i\in\{1,\ldots, r-1\}$, which we denote by $u=(u_1,\ldots,u_r)$.  We represent $u$ by its Young diagram $Y_u$, which is a diagram of $n$-many boxes arranged into left-justified rows such that the $i$-th row from the top contains $u_i$-many boxes.  We let $v_j$ denote the length of the $j$-th column from the left. Put $r(u)=r+1$ if $Y_u$ is a rectangle (i.e., if $u_1=\ldots=u_r$) and put $r(u)=r$ if $Y_u$ is not a rectangle.

A \emph{tableau on $Y_u$ with values in $\{1,\ldots, d\}$} is a labeling of the boxes in $Y_u$ with elements in $\{1,\ldots, d\}$ such that the labeling is weakly increasing from left to right and strongly increasing from top to bottom; we let $T=(t_{ij})$ denote a tableau with the integer $t_{ij}\in\{1,\ldots, d\}$ in the $j$-th column of the $i$-th row of $Y_u$.  If $d\geq r$, then there is a tableau on $Y_u$ which has $i$ in each box of the $i$-th row from the top; we refer to this tableau as the standard tableau, and we denote it by $T_1$.  If $d\geq r(u)$, then there are tableau $T_2,\ldots,T_{d}$ on $Y_u$ with values in $\{1,\ldots,d\}$ such that for all $i\in\{1,\ldots, d-1\}$, there is an integer $j\in\{1,\ldots, d-1\}$ such that $T_j$ and $T_{j+1}$ have the same entries in all but one box, and in this box $T_j$ contains $i$ and $T_{j+1}$ contains $i+1$.

Let $R$ be a commutative ring with $1$. The partition $u$ gives rise to the Schur functor $\Schur^u(-)$, which sends $R$-modules to $R$-modules.  
If $M$ is an $R$-module, then $\Schur^u(M)$ may be realized as a quotient of the $R$-module $\Lambda^{v_1}(M)\otimes\ldots\otimes\Lambda^{v_{u_1}}(M)$.  If $\{m_1,\ldots, m_k\}\subset M$ and if $T=(t_{ij})$ is a tableau on $Y_u$ with values in $\{1,\ldots, k\}$, then we let $m_T$ denote the image of the element $(m_{t_{11}}\wedge\ldots\wedge m_{t_{v_1 1}})\otimes\ldots\otimes(m_{t_{1u_1}}\wedge\ldots\wedge m_{t_{v_{u_1}u_1}})$ in $\Schur^u(M)$. If $M$ is a free $R$-module of finite rank with basis $(e_1,\ldots,e_d)$, then $\Schur^u(M)$ is a free $R$-module with basis $(e_T)_{_T}$, where $T$ ranges over all tableaux on $Y_u$ with values in $\{1,\ldots, d\}$.

For example, if $M$ is an $R$-module then the Schur module associated to the partition $u=(n)$ is $\Sym^n(M)$ and the Schur module associated to the partition $u=(1,\ldots,1)$ is $\Lambda^n(M)$.  The fundamental properties of tableaux and Schur modules are developed in \cite{Ful97}.

If $W=(W_{\e},W_{\dR}^+)$ is a $B^{\otimes E}_{|K}$-pair, then $\Schur^u(W)=(\Schur^u(W_{\e}),\Schur^u(W_{\dR}^+))$ is a $B^{\otimes E}_{|K}$-pair.  If $V$ is an $E$-linear representation of $G_K$, then we have an isormorphism of $B^{\otimes E}_{|K}$-pairs $\Schur^u(W(V))\riso W(\Schur^u(V))$.

\begin{Lem}\label{dRschurlem0}
Let $F/\Qp$ be a finite extension such that $K\supset F^{\Gal}$ and let $\B\in\{\Cp,\BdR\}$. If $W$ is a $\B_F$-representation of $G_K$ and if $W= \bigoplus_{h:F\to \Qpbar} W_h$ is the decomposition of $W$ as a $\B$-representation of $G_K$ from paragraph \ref{coefficientsdecomposition}, then the decomposition of the $\B_F$-representation $\Schur^u(W)$ is given by $\Schur^u(W)=\bigoplus_{h:F\to \Qpbar} \Schur^u(W_h)$.
\end{Lem}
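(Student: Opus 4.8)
The plan is to reduce the statement to the corresponding compatibility of Schur functors with the ring-decomposition of \ref{descentmap}, and then to the multiplicativity of $\Schur^u$ on the factors. First I would recall that, by the hypothesis $K\supset F^{\Gal}$, the isomorphism \ref{descentmap} gives a product decomposition of rings $\B_F \simeq \prod_{h:F\to\Qpbar}\B$, compatible with $G_K$, and that a $\B_F$-module is the same thing as a tuple $(W_h)_h$ of $\B$-modules indexed by the embeddings $h$; the $G_K$-action permutes (indeed, acts within) these factors in the manner already spelled out in paragraph \ref{coefficientsdecomposition}. Concretely, write $e_h\in\B_F$ for the idempotent projecting onto the $h$-factor, so that $1=\sum_h e_h$, $e_h e_{h'}=0$ for $h\neq h'$, and $W_h = e_h W$. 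What has to be checked is that $\Schur^u$, being a functor built out of tensor powers and (anti)symmetrizations over the base ring $\B_F$, carries the decomposition $W=\bigoplus_h W_h$ to $\bigoplus_h \Schur^u(W_h)$.

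The key step is the following purely ring-theoretic observation: if $R=\prod_{i\in I}R_i$ is a finite product of commutative rings and $M$ is an $R$-module, then writing $M_i=e_i M$ one has a canonical isomorphism $\Schur^u_R(M)\simeq\bigoplus_{i\in I}\Schur^u_{R_i}(M_i)$, functorially in $M$. I would prove this by first establishing it for the tensor power $M^{\otimes_R n}$ — here $e_i(M\otimes_R M)=M_i\otimes_{R_i}M_i$ because $e_i$ acts as $1$ on the $i$-factor and as $0$ on all others, and mixed tensors $M_i\otimes M_j$ with $i\neq j$ vanish since $e_i e_j=0$ — and then noting that the symmetric group action defining $\Schur^u$ as a subquotient (or, in the realization given in \ref{schurnotation}, as a quotient of $\Lambda^{v_1}M\otimes\cdots\otimes\Lambda^{v_{u_1}}M$) commutes with the idempotents $e_i$, which lie in the center of $R$ and act $R$-linearly. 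Hence applying $e_i$ commutes with forming $\Schur^u$, and $e_i\,\Schur^u_R(M) = \Schur^u_{R_i}(M_i)$. Equivalently, in the explicit basis language of \ref{schurnotation}: if $(e_1,\dots,e_d)$ is an $R$-basis of a free $R$-module $M$ then $e_i\cdot e_T$ picks out exactly the $\Schur^u_{R_i}$-basis element attached to the image of the $e_j$'s in $M_i$, and this is compatible with wedge and tensor, which is all that defines $m_T$.

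Granting this, the lemma is immediate: take $R=\B_F$, $R_h=\B$ via the $h$-factor map of \ref{descentmap}, and $M=W$, so that $M_h=W_h$ by definition of the decomposition in \ref{coefficientsdecomposition}; the ring-theoretic isomorphism then reads $\Schur^u(W)\simeq\bigoplus_{h:F\to\Qpbar}\Schur^u(W_h)$. It remains only to observe that this isomorphism is $G_K$-equivariant, which follows because $g\in G_K$ sends $e_h$ to $e_{g\circ h}$ (as \ref{descentmap} commutes with $G_K$) and because $\Schur^u$ is a functor, so it transports the semilinear $G_K$-action on $W$ to the semilinear $G_K$-action on $\Schur^u(W)$ compatibly with the reindexing of the factors; tracking the idempotents through shows $g:\Schur^u(W)_h \riso \Schur^u(W)_{g\circ h}$ agrees with $\Schur^u$ applied to $g:W_h\riso W_{g\circ h}$.

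The main obstacle, such as it is, is bookkeeping rather than mathematics: one must be careful that the \emph{quotient} presentation of $\Schur^u(M)$ used in \ref{schurnotation} (as a quotient of $\Lambda^{v_1}M\otimes\cdots\otimes\Lambda^{v_{u_1}}M$) behaves well under the idempotent decomposition — this is fine because passing to $e_i$-components is exact (it is just a direct-summand projection on modules over a finite product of rings), so it commutes with taking quotients, and the Young-symmetrizer relations defining the kernel are $R$-linear and hence stable under $e_i$. With exactness of $M\mapsto e_i M$ in hand, every construction entering the definition of $\Schur^u$ commutes with it, and the lemma follows. I would present this at the level of the two displayed isomorphisms above and leave the idempotent diagram-chase to the reader, since it is identical in form to the proof of Lemma \ref{dRtensorlem0} for the tensor product.
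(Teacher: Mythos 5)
Your argument is correct. Note, however, that the paper gives no proof of this lemma at all (nor of the analogous Lemma \ref{dRtensorlem0}); it is stated as an immediate consequence of the ring decomposition \ref{descentmap}, and the idempotent argument you spell out --- mixed factors $W_h\otimes W_{h'}$ ($h\neq h'$) killed by $e_he_{h'}=0$, exactness of $M\mapsto e_hM$ carrying the quotient presentation of $\Schur^u$ through, and $G_K$-stability of each factor because $K\supset F^{\Gal}$ forces $g\circ h=h$ --- is exactly the intended justification.
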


\section{Hodge-Tate tensor products and Schur $B$-pairs}

\subsection{Sen's theory of $\Cp$-representations}\label{sentheory}
In this paragraph, we suppose that $E/\Qp$ contains $K^{\Gal}$.
Let $\chi:G_K\to\mbf{Z}_p^\times$ denote the cyclotomic character, $H_K=\Gal(\Qpbar/K_{\infty})$ its kernel, and $\Gamma_K = \Gal(K_{\infty}/K)$. In 
\cite{Sen80}, Sen associates to a $\CpE{E}$-representation $W$ of $G_K$ the $(K_{\infty}\otimes_{\Qp} E)$-module 
$D_{\sen}(W)$, which is free of rank $d=\rank_{\CpE{E}}(W)$ and is endowed with an $K_{\infty}$-semi-linear $E$-linear action of 
$\Gamma_K$, together with a $(K_{\infty}\otimes_{\Qp} E)$-linear operator $\Theta_W$ which gives the action of 
$\text{Lie}(\Gamma_K)$ on $D_{\sen}(W)$.  The action of $\Gamma_K$ commutes with $\Theta_W$, and therefore the characteristic 
polynomial $P_{W}$ has coefficients in $(K_{\infty}\otimes_{\Qp} E)^{\Gamma_K}=K\otimes_{\Qp} E$. If $h:K\to E$ is an embedding, we may associate to $W$ the set of its $h$-weights $\poids^h(W):=\{x\in \Qpbar| P^h_W(x)=0\}$ of
roots of $P^h_W$ counted with multiplicity, where $P^h_W$ is the polynomial of degree $d$ with coefficients in $E$ coming from 
$(e,k)\mapsto e\cdot h(k) : K\otimes_{\Qp} E\to E$. For example, if $\CpE{E}(i)$ denotes the $\CpE{E}$-representation 
associated to the $i$-fold twist by the cyclotomic character ($i\in \mathbf{Z}$) and if $h:K\to E$ is an embedding, then $h$-weight of $\CpE{E}(i)$ is $i$.

In particular, if $W=(W_{\e},W_{\dR}^+)$ is a $B_{|K}^{\otimes E}$-pair, then the above may be applied to the $\CpE{E}$-representation $\ol{W}= W_{\dR}^+/tW_{\dR}^+$.  The set of Sen weights of the underlying $\Qp$-linear representation of $V$ is the set of all $h$-weights for all embeddings $h:K\to E$ together with their images under the embeddings of $E$ into $\Qpbar$.  We let $\poids(W)$ denote the set of all Sen weights 
associated to $V$.  Sen showed in \cite[2.3]{Sen80} that a $\Cp$-representation $W$ is Hodge-Tate if and only if it is semi-simple with integer Sen weights. In particular, an $E$-linear representation $V$ is Hodge-Tate if and only if the $\CpE{E}$-representation $W=\Cp\otimes_{\Qp} V$ is semi-simple and for each $h$, all the $h$-weights of $W$ are in $\mathbf{Z}$.

If the Sen weights of $W$ are all in $\mathbf{Z}$, then \cite[thm. 2.14]{Fon04} implies that $W$ is a direct sum of indecomposible objects of the form $\Cp[i;d] :=\Cp(i)\otimes_{\Zp} \Zp(0;d)$ where $i\in\mathbf{Z}$ is a Sen weight of $W$ and $\Zp(0;d)$ is the $\Zp$-module of polynomials in $\log t$ of degree less than or equal to $d$ with coefficients in $\Zp$. The representation $\Cp[i;d]$ is simple if and only if $d=0$.

The $(K_{\infty}\otimes_{\Qp} E)$-module $D_{\sen}(W)$ and its operator $\Theta_W$ satisfy the following properties.

\begin{Prop}\label{thetas} 
Let $W$ and $W'$ be two $\CpE{E}$-representations of $G_K$. 
\begin{itemize}
\item[(i)] If $W'$ is a sub-representation of $W$, then $\Theta_W |_{W'} = \Theta_{W'}$  and $\Theta_{W/W'}$ is the canonical map induced by $\Theta_W$.  In particular, if $0\to W'\to W\to W''\to 0$ is an exact sequence of $\CpE{E}$-representations, then $P_{\Theta_W}=P_{\Theta_{W'}}P_{\Theta_{W''}}$ and $\poids^h(W) = \poids^h(W')\cup \poids^h(W'')$ (with multiplicity).  

\item[(ii)] If $W$ is a $\CpE{E}$-representation and if $F/E$ is a finite extension, then $D_{\sen}(F\otimes_E W)=F\otimes_E D_{\sen}(W)$ and $\Theta_{F\otimes W}$ is the $F$-linearisation of $\Theta_W$. In particular, the $h$-weights of an $E$-linear representation $V$ are the same as those of $F\otimes_E V$.

\item[(iii)] If $W$ and $W'$ are two $\CpE{E}$-representations of $G_K$, then 
  $D_{\sen}(W\otimes_{\CpE{E}}W')=D_{\sen}(W)\otimes_{(K_{\infty}\otimes_{\Qp} E)} D_{\sen}(W')$ and the Sen operator on 
  $D_{\sen}(W\otimes_{\CpE{E}}W')$ is $\Theta_W\otimes \Id + \Id\otimes \Theta_{W'}$. 
  In particular, for each embedding $h:K\to E$, the $h$-weights of  $W\otimes_{\CpE{E}} W'$ are the elements $s + s'$, where $s$ is an $h$-weight of $W$ and $s'$ is an $h$-weight of $W'$.  
\item[(iv)] If $L/K$ is a finite Galois extension, then $D_{\sen}(W|_{G_L})=L_{\infty}\otimes_{K_\infty} D_{\sen}(W)$ as an $L_{\infty}\otimes_{\Qp} E$-representation of $\Gamma_L$, and $\Theta_{W|_{G_L}}$ is the $L_{\infty}$-linearization of $\Theta_W$.
\end{itemize}
\end{Prop}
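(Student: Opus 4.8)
The plan is to deduce all four parts from the two-step structure of Sen's construction. For a $\CpE{E}$-representation $W$ of rank $d$, one first passes to the $H_K$-invariants $W^{H_K}$, which by Tate--Sen is free of rank $d$ over $\widehat{K_\infty}\otimes_{\Qp}E$ (with $\widehat{K_\infty}=\Cp^{H_K}$) and satisfies $W^{H_K}\otimes_{\widehat{K_\infty}\otimes_{\Qp}E}(\CpE{E})\riso W$; then $D_{\sen}(W)$ is singled out inside $W^{H_K}$ as the unique $(K_\infty\otimes_{\Qp}E)$-submodule which is free of rank $d$, $\Gamma_K$-stable, generates $W^{H_K}$ after completion, and on which $\Gamma_K$ acts locally finitely in the sense of \cite{Sen80}. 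The operator $\Theta_W$ is the $(K_\infty\otimes_{\Qp}E)$-linear operator obtained from the $\Gamma_K$-action by the normalized logarithm $\tfrac{1}{\log\chi(\gamma)}\log\gamma$ (for $\gamma\in\Gamma_K$ close enough to $1$, the logarithm converging by local finiteness); it commutes with every $\Gamma_K$-equivariant map, and since $\Gamma_K$ commutes with $\Theta_W$ its characteristic polynomial $P_W$ has coefficients in $(K_\infty\otimes_{\Qp}E)^{\Gamma_K}=K\otimes_{\Qp}E$. I will use that $W\mapsto W^{H_K}$ and the decompletion $W^{H_K}\mapsto D_{\sen}(W)$ are exact (being equivalences of categories), so that $W\mapsto(D_{\sen}(W),\Theta_W)$ is an exact functor into $(K_\infty\otimes_{\Qp}E)$-modules equipped with a compatible operator.

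Parts (i), (ii), and (iv) then follow either formally or by the uniqueness characterization. For (i), an exact sequence $0\to W'\to W\to W''\to 0$ maps under $D_{\sen}$ to an exact sequence $0\to D_{\sen}(W')\to D_{\sen}(W)\to D_{\sen}(W'')\to 0$, and naturality of $\Theta$ (it commutes with $\Gamma_K$-equivariant maps) gives $\Theta_W|_{D_{\sen}(W')}=\Theta_{W'}$ and that $\Theta_W$ induces $\Theta_{W''}$; then $P_W=P_{W'}P_{W''}$ and the additivity of $h$-weights are the usual block-triangular facts for a short exact sequence of modules with compatible operator. For (ii), for $F/E$ finite the functor $F\otimes_E(-)$ is exact and commutes with $H_K$-invariants, and $F\otimes_E D_{\sen}(W)$ has all the properties characterizing $D_{\sen}(F\otimes_E W)$, so the two coincide; $\Theta_{F\otimes W}$ is then the $F$-linearization of $\Theta_W$ (the defining limit is over $\Gamma_K$, which acts trivially on $F$), and its characteristic polynomial is the image of $P_W$ under $K\otimes_{\Qp}E\to K\otimes_{\Qp}F$, hence has the same roots in $\Qpbar$, which is the assertion on $h$-weights. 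For (iv), for $L/K$ finite Galois the extension $L_\infty/K_\infty$ is finite Galois, $\Gamma_L$ has open image in $\Gamma_K$, and $W^{H_L}=(\widehat{L_\infty}\otimes_{\Qp}E)\otimes_{\widehat{K_\infty}\otimes_{\Qp}E}W^{H_K}$ by Tate--Sen; the $(L_\infty\otimes_{\Qp}E)$-module $L_\infty\otimes_{K_\infty}D_{\sen}(W)$, with its natural $\Gamma_L$-action, is $\Gamma_L$-stable, locally finite, and of full rank inside $W^{H_L}$, hence equals $D_{\sen}(W|_{G_L})$, while a small open subgroup of $\Gamma_L$ computes the same derivation as inside $\Gamma_K$, so $\Theta_{W|_{G_L}}$ is the $L_\infty$-linearization of $\Theta_W$.

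The substance is (iii). From $W^{H_K}\otimes_{\widehat{K_\infty}\otimes_{\Qp}E}(\CpE{E})\riso W$ and the analogous isomorphism for $W'$ one computes $(W\otimes_{\CpE{E}}W')^{H_K}=W^{H_K}\otimes_{\widehat{K_\infty}\otimes_{\Qp}E}(W')^{H_K}$, using that these invariant modules are finite free and that $(\CpE{E})^{H_K}=\widehat{K_\infty}\otimes_{\Qp}E$. Inside this, $D_{\sen}(W)\otimes_{K_\infty\otimes_{\Qp}E}D_{\sen}(W')$ is a $(K_\infty\otimes_{\Qp}E)$-submodule of rank $\rank(W)\rank(W')$ whose completion is the whole module; it is $\Gamma_K$-stable, and the $\Gamma_K$-action on it is locally finite, since a tensor product of two locally finite actions is locally finite (a pure tensor of vectors each fixed by an open subgroup is fixed by the intersection). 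By the uniqueness characterization it is therefore $D_{\sen}(W\otimes W')$. For the Sen operator, $\gamma$ acts on the tensor product as the tensor product of its actions on the two factors, and $\log(A\otimes B)=(\log A)\otimes\Id+\Id\otimes(\log B)$ for $A,B$ close to $\Id$ (they commute, as $A\otimes B=(A\otimes\Id)(\Id\otimes B)$); dividing by $\log\chi(\gamma)$ yields $\Theta_{W\otimes W'}=\Theta_W\otimes\Id+\Id\otimes\Theta_{W'}$. Finally, fixing an embedding $h:K\to E$ and base-changing along $h$ and then along $E\hookrightarrow\Qpbar$ turns $\Theta_{W\otimes W'}$ into the Kronecker sum $A\otimes\Id+\Id\otimes B$ of the matrices of $\Theta_W$ and $\Theta_{W'}$, whose eigenvalues counted with multiplicity are the pairwise sums of those of $A$ and $B$; this is the claimed description of the $h$-weights of $W\otimes W'$. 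I expect the main obstacle to be exactly this verification in (iii) --- that $D_{\sen}(W)\otimes D_{\sen}(W')$ really is the decompleted module (full rank, full completion, locally finite $\Gamma_K$-action) --- together with the minor care forced by $K\otimes_{\Qp}E$ being merely an \'etale $\Qp$-algebra rather than a field, which is why the eigenvalue statements are read off only after extending scalars to $\Qpbar$.
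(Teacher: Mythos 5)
The paper gives no proof of this proposition: it is stated as a list of standard properties of Sen's functor, implicitly quoted from \cite{Sen80} and \cite{Fon04}. Your argument is correct and is exactly the standard derivation those references supply --- exactness of $H_K$-invariants and decompletion for (i), (ii), (iv), and the uniqueness/maximality characterization of the locally finite $\Gamma_K$-submodule plus the Leibniz rule for the derivative of the $\Gamma_K$-action for (iii) --- so it fills in precisely what the paper takes for granted.
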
    

\begin{Cor}\label{Schurweights}
Let $W$ be a $\CpE{E}$-representation of $G_K$. If $h:K\to E$ is an embedding and if $a_{1,h},\ldots, a_{d,h}$ denote the $h$-weights of $W$, then the $h$-weights of $\Schur^u(W)$ are the elements $a_T=\sum_{i,j} a_{t_{ij},h}$ for any tableau $T=(t_{ij})$ on the Young diagram of $u$ with values in $\{1,\ldots, d\}$.
\end{Cor}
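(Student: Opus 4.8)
\emph{Proof plan.} The plan is to reduce the corollary to a formal identity relating the characteristic polynomial of $\Schur^u(\theta)$ to that of $\theta$, and then to feed this into Proposition \ref{thetas}. First I would record that, since we are in characteristic $0$, $\Schur^u(W)$ is a direct summand of $W^{\otimes n}$, cut out by a Young symmetrizer $c_u\in E[S_n]$; the same idempotent cuts $\Schur^u(D_{\sen}(W))$ out of $D_{\sen}(W)^{\otimes n}$. By functoriality of $D_{\sen}$ and of the Sen operator (Proposition \ref{thetas}(i)), the $S_n$-action on $W^{\otimes n}$ passes to one on $D_{\sen}(W^{\otimes n})$ commuting with $\Theta_{W^{\otimes n}}$; combined with Proposition \ref{thetas}(iii), which gives $D_{\sen}(W^{\otimes n}) = D_{\sen}(W)^{\otimes n}$ with $\Theta_{W^{\otimes n}} = \sum_{k=1}^n \Id^{\otimes(k-1)}\otimes\,\Theta_W\otimes\Id^{\otimes(n-k)}$, this yields a canonical identification $D_{\sen}(\Schur^u(W)) = \Schur^u(D_{\sen}(W))$ under which $\Theta_{\Schur^u(W)}$ is the derivation $\Theta_W^{(u)}$ induced on $\Schur^u$ by $\Theta_W$. (One can also argue via the realization of $\Schur^u(W)$ as a subquotient of $\Lambda^{v_1}(W)\otimes\cdots\otimes\Lambda^{v_{u_1}}(W)$ and the exactness of $D_{\sen}$.) Taking characteristic polynomials over $K_\infty\otimes_{\Qp} E$ and then applying the $h$-factor map $K\otimes_{\Qp} E\to E$, we get $P^h_{\Schur^u(W)} = \det(X-\theta^{(u)})$ for $\theta$ any operator on a $d$-dimensional $E$-vector space with $\det(X-\theta) = P^h_W$.

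Next I would establish the formal statement: for an endomorphism $\theta$ of a free rank-$d$ module over any commutative ring, writing $\det(X-\theta) = \prod_{i=1}^d(X-a_i)$, one has $\det(X-\theta^{(u)}) = \prod_T(X-a_T)$ with $a_T = \sum_{i,j} a_{t_{ij}}$, the product over all tableaux $T=(t_{ij})$ on $Y_u$ with values in $\{1,\dots,d\}$. Both sides are monic polynomials in $X$ whose coefficients are polynomials with integer coefficients in the coefficients of $\det(X-\theta)$ — the right-hand side because $\prod_T(X-a_T)$ is a symmetric function of $a_1,\dots,a_d$ — so it suffices to check the identity over a field for one $\theta$ with $d$ distinct eigenvalues. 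Choosing eigenvectors $e_1,\dots,e_d$ of such a $\theta$, the elements $e_T$ form a basis of $\Schur^u$ by \cite{Ful97}, and since $\theta^{(u)}$ is the differential of the $\GL$-action it obeys the Leibniz rule over $\otimes$ and $\wedge$, whence $\theta^{(u)}(e_T) = a_T e_T$; the identity follows. Together with the first paragraph this gives $P^h_{\Schur^u(W)} = \prod_T(X - a_T)$ where the $a_{\cdot,h}$ are the roots of $P^h_W$, which is the assertion of the corollary.

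The points demanding (routine) care are the identification $D_{\sen}(\Schur^u(W)) = \Schur^u(D_{\sen}(W))$ together with the formula for $\Theta_{\Schur^u(W)}$ — this is where Proposition \ref{thetas} is actually used, and one must check that $\Theta_{W^{\otimes n}}$ preserves the summand $c_u\cdot W^{\otimes n}$ and restricts there to $\Theta_W^{(u)}$ — and the eigenbasis computation $\theta^{(u)}(e_T)=a_T e_T$, which relies on the monomial basis of $\Schur^u$ from \cite{Ful97}. I do not anticipate a serious obstacle beyond this bookkeeping; specializing $u=(n)$ and $u=(1,\dots,1)$ recovers the familiar formulas for the eigenvalues of $\Sym^n(\theta)$ and $\Lambda^n(\theta)$.
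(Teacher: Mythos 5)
Your argument is correct and is exactly the route the paper intends: the corollary is stated without proof as an immediate consequence of Proposition \ref{thetas} (i) and (iii) (realizing $\Schur^u(W)$ inside $W^{\otimes n}$, with the induced Sen operator the associated derivation) together with the standard basis $(e_T)_T$ of the Schur module recalled in \S\ref{schurnotation}, on which that derivation acts by the scalars $a_T$. Your genericity reduction to a $\theta$ with distinct eigenvalues is harmless extra care; the rest matches the paper's implicit proof.
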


\begin{Lem}\label{charwts} 
Let $\omega_{1},\ldots, \omega_{r}$ be elements of $E$ and let $h_1,\ldots,h_r$ be the embeddings of $K$ into $E$.
There exists a finite extension $F/E$ and a character $\mu:G_K\to F^\times$ such that $\poids^{h_i}(F(\mu)) = \{\omega_{i}\}$ for 
$i=1,\ldots,r$.
\end{Lem}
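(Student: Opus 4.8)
The plan is to build the character $\mu$ as a product of crystalline characters, one for each embedding $h_i$, each of which has prescribed weight at $h_i$ and weight $0$ at all the other embeddings $h_j$. Concretely, for a fixed embedding $h_i:K\to E$ and a prescribed value $\omega_i\in E$, I would first treat the case where $\omega_i$ is a \emph{negative integer} (or more generally an integer): then the Lubin--Tate character $\chi_{h_i}:G_K\to \mcl{O}_K^\times\hookrightarrow E^\times$ attached to the embedding $h_i$ (equivalently, the character whose restriction to $G_{K^{\mathrm{ab}}}$ is given by the $i$-th factor of the norm via local class field theory, normalized so that its Hodge--Tate--Sen weight is $1$ at $h_i$ and $0$ at $h_j$ for $j\neq i$) has, by the standard computation of the Sen weights of Lubin--Tate characters, $\poids^{h_j}(\chi_{h_i})=\{\delta_{ij}\}$. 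Raising to the power $\omega_i\in\Z$ gives a character with $h_j$-weight $\omega_i\delta_{ij}$.

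The next step handles general $\omega_i\in E$, which need not be integral. Here I would pass to a finite extension $F/E$: by Proposition \ref{thetas}(ii), extending coefficients does not change the $h$-weights, so it suffices to produce the desired character over some finite $F$. The key point is that the assignment ``character $\mapsto$ its tuple of Sen weights $(\omega_1,\dots,\omega_r)\in E^r$'' is, after allowing finite base extension, surjective onto $E^r$. One way to see this: the group of continuous characters $G_K\to F^\times$ (for $F$ large enough, e.g. containing the images of all $h_i$) maps via the Sen operator to $\Hom(\mathrm{Lie}\,\Gamma_K, F)\otimes\cdots$, and by Sen's theory (or by the explicit theory of $\B_{\dR}$-admissible characters, or by Nakamura's classification of rank-one $B$-pairs) this map is surjective: one can realize any weight tuple by an explicit rank-one $B$-pair $(W_\e,W_{\dR}^+)$ with $W_\e$ trivialized suitably and $W_{\dR}^+ = \prod_j t^{\lceil\text{stuff}\rceil}\B_{\dR}^+$-twisted, then use that $H^1$ of the relevant period ring surjects onto the space of weights. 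Since we only need \emph{existence} of a character (not crystallinity, not any further property), this surjectivity is exactly what the lemma asks for. So I would: (a) reduce to $F$ containing all $h_i(K)$; (b) write $\omega_i = n_i + \omega_i'$ with $n_i\in\Z$ and $\omega_i'$ in a small disc; (c) use Lubin--Tate characters (and products thereof) to hit the integer parts $n_i\delta_{ij}$, and use an explicit construction of characters with weights in a small disc around $0$ (e.g. $\exp$ of a suitable additive character, or the characters $\eta\mapsto\eta^s$ obtained by $p$-adic interpolation on $1+\mfk{m}_F$) to hit the fractional parts; (d) multiply these $r$ characters together and invoke Proposition \ref{thetas}(iii) (Sen weights add under tensor product, hence under multiplication of characters) to get $\poids^{h_i}$ of the product equal to $\{\omega_i\}$.

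I expect the main obstacle to be the bookkeeping in step (c)--(d): one must be careful that the product of the characters constructed for the various embeddings does not create unwanted cross-terms, i.e. that the character built for $h_i$ genuinely has weight exactly $0$ (not merely ``integral'' or ``small'') at every $h_j$ with $j\neq i$. This is why the Lubin--Tate normalization is essential — the Lubin--Tate character for $h_i$ is \emph{defined} so that its Sen polynomial $P^{h_j}$ is $X$ for $j\neq i$ and $X-1$ for $j=i$ — and why, for the non-integral part, I would construct characters that are products of a Lubin--Tate-type character with an unramified character or a character of $\Gamma_K$ valued in $1+\mfk{m}_F$, which are manifestly Hodge--Tate--Sen of weight $0$ at all embeddings except the intended one. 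A secondary, more technical point is verifying surjectivity of the weight map onto all of $E^r$ rather than just $\Z^r$; the cleanest citation for this is Nakamura's or Berger--Chiarellotto's classification of rank-one $B^{\otimes E}_{|K}$-pairs, which shows the weight tuple can be arbitrary, and one then notes that every rank-one $B$-pair is of the form $W(\mu)$ for a character $\mu$ of $G_K$ valued in a finite extension of $E$ (again after enlarging coefficients).
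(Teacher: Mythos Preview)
Your main line of argument---take Lubin--Tate characters attached to each embedding $h_i$ (whose $h_j$-weight is $\delta_{ij}$), raise each to the $\omega_i$-th power by exploiting the $\Zp$-module structure of the pro-$p$ part of $\mcl{O}_K^\times$ after passing to a finite extension $F/E$, and multiply---is exactly the paper's approach. The paper is simply more explicit about step (c): it writes $\omega=p^{-n}\omega'$ with $\omega'\in\mcl{O}_E$, projects $\chi_K$ onto the free $\Zp$-module summand $\Zp^r$ of $1+\mfk{m}_K$, adjoins $p^n$-th roots of a $\Zp$-basis to get $F$, and defines $\langle\chi_K\rangle^\omega$ directly, without your integer/fractional split (which is harmless but unnecessary).

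One genuine error in your aside: the claim that ``every rank-one $B$-pair is of the form $W(\mu)$ for a character $\mu$ of $G_K$ valued in a finite extension of $E$'' is false. In Nakamura's parametrization, rank-one $B^{\otimes E}_{|K}$-pairs correspond to continuous characters $\delta:K^\times\to E^\times$, and only those for which $\delta$ takes unit values on a uniformizer (equivalently, those whose underlying $(\varphi,\Gamma)$-module is \'etale) arise from genuine Galois characters; enlarging coefficients does not repair this. So the alternative route you sketch via $B$-pair classification does not land you back in the world of characters $G_K\to F^\times$, and you should rely on the direct Lubin--Tate construction instead.
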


\begin{proof} 
Let $\chi_K:G_K\to \mcl{O}_K^\times$ be the character associated to a Lubin-Tate module over $\mcl{O}_K$. 
The $h$-weight of $K(\chi_K)$ is $1$ if $h$ is the inclusion of $K$ in $E$, and $0$ otherwise \cite[thm. I.2.1]{Col93}.

If $\omega\in E$, then $\omega=p^{-n}\omega'$ for some $\omega'\in \mcl{O}_E$, and some integer $n\geq 0$.  
Consider the topological factorisation $\mcl{O}_K^\times = [k_K^\times]\times (1+\mfk{m}_K)$.  There is a topological 
factorisation of the $\mbf{Z}_p$-module $1+\mfk{m}_K$ into 
$\mbf{Z}/p^a\mbf{Z}\times \mbf{Z}_p^r$, where $a\geq 0$ and $r=[K:\mbf{Q}_p]$.  Let $\langle\chi_K\rangle$ denote the 
projection of $\chi_K$ onto the free submodule $\mbf{Z}_p^r$ in $1+\mfk{m}_K$. If $y_1,\ldots, y_r$ are a 
$\mbf{Z}_p$-basis of $\mbf{Z}_p^r$, and if $F/E$ is an extension containing $z_1,\ldots, z_r\in 1+\mfk{m}_F$ such that 
$z_i^{p^n} = y_i$, then the map $\mu(y_1^{a_1}\cdot\ldots\cdot y_r^{a_r}):= z_1^{\omega' a_1}\cdot\ldots\cdot z_r^{\omega' a_r}$ 
composed with $\langle\chi_K\rangle$ is a character whose $h$-weight is $p^{-n}\omega'=\omega$ when $h=id$ and $0$ otherwise. 
We denote this character by $\langle \chi_K\rangle^{\omega}$.

Given $\omega_{1},\ldots, \omega_{r}\in E$, the product of characters 
$\prod \langle h_i^{-1}(\chi_K)\rangle^{\omega_i}$ has $h_i$-weight equal to $\omega_{i}$ for each $1\leq i\leq r$, where $h_i^{-1} : F \to F$ is the inverse of an automorphism $h_i : F \to F$ extending $h_i : K \to E\subset F$.
\end{proof}
\subsection{Fontaine's theory of $\BdR$-representations}
Let $W$ be a $\BdR$-representation of $G_K$ and let $\mcl{W}$ be a $G_K$-stable $\BdRp$-lattice.  The quotient $\ol{\mcl{W}}:=\mcl{W}/t\mcl{W}$ is a $\Cp$-representation, and we may therefore associate to it the set $\poids(\ol{\mcl{W}})$ of its Sen weights, which is a set of elements of $\Qpbar$ of cardinal $\dim_{\BdR} W$ which is stable by the action of $G_K$.  The following proposition shows that all lattices of $W$ have the same Sen weights up to integers, so that the set of Sen weights  modulo $\mathbf{Z}$ of a lattice $\mcl{W}$ is an invariant of $W$. 

\begin{Prop}
\label{weightintegrality} 
Let $W$ be a $\BdR$-representation of $G_K$.  If $\mcl{W}$ and $\mcl{W}'$ are two $G_K$-stable lattices of $W$, then each Sen weight of $\ol{\mcl{W}'}$ may be written in the form $\alpha + i$ for $\alpha$ a Sen weight $\alpha$ of $\ol{\mcl{W}}$ and $i\in\mathbf{Z}$.
\end{Prop}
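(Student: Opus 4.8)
The plan is to reduce the statement to a single ``elementary'' commensurability step and then iterate it. First I would use that $\BdRp$ is a complete discrete valuation ring with uniformizer $t$, so that $\BdR=\BdRp[1/t]$ and any two $\BdRp$-lattices in the finite-dimensional $\BdR$-vector space $W$ are commensurable. Hence $t^{N}\mcl{W}'\subseteq\mcl{W}$ for some $N\geq 0$; since replacing $\mcl{W}'$ by $t^{N}\mcl{W}'$ twists $\ol{\mcl{W}'}$ by $\Cp(N)$ and thus shifts each of its Sen weights by the integer $N$, I may assume $\mcl{W}'\subseteq\mcl{W}$. Then $\mcl{W}/\mcl{W}'$ is a finitely generated torsion $\BdRp$-module, hence is killed by $t^{k}$ for some $k\geq 0$, i.e.\ $t^{k}\mcl{W}\subseteq\mcl{W}'$.

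Next I would interpolate between $\mcl{W}$ and $\mcl{W}'$ by the chain of $G_{K}$-stable $\BdRp$-lattices
\[
\mcl{W}=\mcl{W}'+t^{0}\mcl{W}\ \supseteq\ \mcl{W}'+t\mcl{W}\ \supseteq\ \cdots\ \supseteq\ \mcl{W}'+t^{k}\mcl{W}=\mcl{W}',
\]
each term being $G_{K}$-stable because $\mcl{W}'$ and each $t^{j}\mcl{W}$ are. Two consecutive terms $\mcl{V}:=\mcl{W}'+t^{j-1}\mcl{W}$ and $\mcl{V}':=\mcl{W}'+t^{j}\mcl{W}$ satisfy $\mcl{V}'\subseteq\mcl{V}$ and $t\mcl{V}=t\mcl{W}'+t^{j}\mcl{W}\subseteq\mcl{V}'$. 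Thus it suffices to prove the elementary step: if $\mcl{V}'\subseteq\mcl{V}$ are $G_{K}$-stable $\BdRp$-lattices of $W$ with $t\mcl{V}\subseteq\mcl{V}'$, then every Sen weight of $\ol{\mcl{V}'}$ is of the form $\alpha$ or $\alpha+1$ for some Sen weight $\alpha$ of $\ol{\mcl{V}}$. Applying this $k$ times along the chain then yields the proposition.

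For the elementary step I would use two short exact sequences of $\Cp$-representations of $G_{K}$. The hypothesis $t\mcl{V}\subseteq\mcl{V}'\subseteq\mcl{V}$ ensures that $\mcl{V}/\mcl{V}'$, $\mcl{V}'/t\mcl{V}$ and $t\mcl{V}/t\mcl{V}'$ are all annihilated by $t$, hence are genuine $\Cp$-representations, to which Sen's theory and Proposition \ref{thetas}(i) apply. The sequence $0\to\mcl{V}'/t\mcl{V}\to\ol{\mcl{V}}\to\mcl{V}/\mcl{V}'\to 0$ shows that the Sen weights of $\ol{\mcl{V}}$ are those of $\mcl{V}'/t\mcl{V}$ together with those of $\mcl{V}/\mcl{V}'$; the sequence $0\to t\mcl{V}/t\mcl{V}'\to\ol{\mcl{V}'}\to\mcl{V}'/t\mcl{V}\to 0$ shows that the Sen weights of $\ol{\mcl{V}'}$ are those of $t\mcl{V}/t\mcl{V}'$ together with those of $\mcl{V}'/t\mcl{V}$. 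Finally, multiplication by $t$ induces an isomorphism of $\Cp$-representations $t\mcl{V}/t\mcl{V}'\simeq(\mcl{V}/\mcl{V}')\otimes_{\Zp}\Cp(1)$, the cyclotomic twist appearing because $g(t)=\chi(g)t$, so the Sen weights of $t\mcl{V}/t\mcl{V}'$ are those of $\mcl{V}/\mcl{V}'$ each increased by $1$. Putting these together, the Sen weights of $\ol{\mcl{V}'}$ split into a subset of the Sen weights of $\ol{\mcl{V}}$ (from $\mcl{V}'/t\mcl{V}$) and a set of the form $\{\alpha+1\}$ with $\alpha$ ranging over a subset of the Sen weights of $\ol{\mcl{V}}$ (from $\mcl{V}/\mcl{V}'$), which is precisely the elementary step.

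The only delicate points I anticipate are pinning down the cyclotomic twist correctly in the isomorphism $t\mcl{V}/t\mcl{V}'\simeq(\mcl{V}/\mcl{V}')(1)$, and verifying that each module occurring in the two exact sequences is really killed by $t$ --- this is exactly where the hypothesis $t\mcl{V}\subseteq\mcl{V}'$ enters. Everything else is a straightforward invocation of Proposition \ref{thetas}(i).
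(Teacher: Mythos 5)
Your proof is correct and follows essentially the same route as the paper: both interpolate between the two lattices by the chain $\mcl{W}'+t^{j}\mcl{W}$, compare consecutive terms via exact sequences of $\Cp$-representations together with the twist $t\mcl{V}/t\mcl{V}'\simeq(\mcl{V}/\mcl{V}')(1)$, and invoke Proposition \ref{thetas}(i). Your preliminary normalization $\mcl{W}'\subseteq\mcl{W}$ and your use of two genuine short exact sequences make the bookkeeping slightly cleaner than the paper's, but the argument is the same.
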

\begin{proof}
Let $c\geq 0$ be an integer such that the lattice $t^c\mcl{W}'$ is contained in $\mcl{W}$ and let $c'\geq 0$ be an integer such that the lattice $t^{c'}\mcl{W}$ is contained in $t^c\mcl{W}'$. 

Consider the sequence of $G_K$-stable lattices : 
\[t^c\mcl{W}'=t^c\mcl{W}' + t^{c'}\mcl{W} \subset t^c\mcl{W}' + t^{c'-1}\mcl{W} \subset \ldots
\subset t^c\mcl{W}' + t\mcl{W}	\subset t^c\mcl{W}' + \mcl{W} =\mcl{W},\]
and let $\mcl{X}_{k}$ denote the lattice $t^c\mcl{W}' + t^{c'-k}\mcl{W}$ (for $0\leq k\leq c'$). We have $G_K$-equivariant inclusions $t\mcl{X}_{k+1} \subset \mcl{X}_k \subset \mcl{X}_{k+1}$ for $k=0,1,\ldots,c'-1$ ; we therefore have exact sequences of 
$\Cp$-representations :
\[ \mcl{X}_{k+1}/t\mcl{X}_{k+1} \to \mcl{X}_{k+1}/\mcl{X}_k \to 0 \quad\text{and}\quad  0\to t\mcl{X}_{k+1}/t\mcl{X}_k \to \mcl{X}_{k}/t\mcl{X}_{k} \to \mcl{X}_{k+1}/t\mcl{X}_{k+1} \]
which, taken together with (i) and (iii) of proposition \ref{thetas}, 
and since $x\mapsto tx$ induces an isomorphism of $(\mcl{X}_{k+1}/\mcl{X}_k)(1)$ onto $t\mcl{X}_{k+1}/t\mcl{X}_k$, implies that 
  $\poids(\ol{\mcl{X}_k}) \subset \poids(\ol{\mcl{X}_{k+1}}) \cup (\poids(\ol{\mcl{X}_{k+1}}) + 1)$. 	
  By recurrence, the Sen weights of $\mcl{X}_0=t^c\mcl{W}'$ 
  are	all of the form $\alpha + i$, where $\alpha$ is a Sen weight of $\ol{\mcl{X}_{c'}}=\ol{\mcl{W}}$ and $i$ is an 
  integer.  Again by (iii) of proposition \ref{thetas}, the Sen weights of $\mcl{W}'$ are of the form $\alpha + i$ where  $\alpha$ is a Sen weight of $\ol{\mcl{W}}$.
	\end{proof}

If $W$ is a $\BdR$-representation of $G_K$ and if $\mcl{W}\subset W$ is a $G_K$-stable lattice, we call the image of the set $\poids(\ol{\mcl{W}})$ modulo $\mathbf{Z}$ the set of {\em de Rham weights} of $W$, and we denote this set by $\drpoids(W)$. The set of de Rham weights of $W$ is endowed with an action of $G_K$. Fontaine's theorem \cite[3.19]{Fon04} shows that any $\BdR$-representation $W$ decomposes along the set of $G_K$-orbits in $\drpoids(W)$, and that $W$ is de Rham if and only if it is semi-simple with de Rham weights in $\Z$.  

If the de Rham weights of $W$ are all in $\mathbf{Z}$, then Fontaine's theorem \cite[3.19]{Fon04} implies that $W$ is a direct sum of indecomposible objects of the form $\BdR[\{0\};d] :=\BdR\otimes_{\Zp} \Zp(0;d)$ where $\Zp(0;d)$ is the $\Zp$-module of polynomials in one variable $X=\log t$ of degree less than or equal to $d$ with coefficients in $\Zp$, such that $g(X)=X + \log(\chi(g))$ for all $g\in G_K$. The representation $\BdR[\{0\};d]$ is simple if and only if $d=0$.

\subsection{Hodge-Tate and de Rham tensor products of $B$-pairs}\label{HTdRpar}

\begin{Lem}\label{dRtensorlem1}
If $W$ and $W'$ are $\Cp$-representations of $G_K$ with Sen weights in $\Z$ such that $W\otimes_{\Cp} W'$ is Hodge-Tate, then $W$ and $W'$ are Hodge-Tate.

If $W$ and $W'$ are $\BdR$-representations of $G_K$ with de Rham weights in $\Z$ such that $W\otimes_{\BdR} W'$ is de Rham, then $W$ and $W'$ are de Rham.  
\end{Lem}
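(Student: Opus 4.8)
The plan is to treat the two statements in parallel, using the structure theory recalled just above: a $\Cp$-representation with integer Sen weights is a direct sum of the indecomposables $\Cp[i;d]$, and a $\BdR$-representation with integer de Rham weights is a direct sum of the indecomposables $\BdR[\{0\};d]$ (up to twist). In both cases ``Hodge-Tate'' (resp. ``de Rham'') is equivalent to ``semi-simple'', i.e. to the condition that every indecomposable summand has $d=0$. So what must be shown is: if $W$ and $W'$ each decompose into indecomposables with integer weights, and $W\otimes W'$ is a direct sum of simple objects, then already every indecomposable summand of $W$ and of $W'$ is simple.

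First I would reduce to the case where $W$ and $W'$ are themselves indecomposable. Indeed, if $W=\bigoplus_a W_a$ and $W'=\bigoplus_b W'_b$ with all $W_a$, $W'_b$ indecomposable, then $W\otimes W'=\bigoplus_{a,b} W_a\otimes W'_b$; a direct sum is semi-simple iff each summand is, so it suffices to treat a single tensor product $W_a\otimes W'_b$. Thus assume $W=\Cp[i;d]$ and $W'=\Cp[i';d']$ (resp. the $\BdR$ analogues, each twisted so that its weight is an integer). The key computation is then that $\Cp[i;d]\otimes_{\Cp}\Cp[i';d']$ contains $\Cp[i+i';d+d']$ as a subquotient — this is immediate since $\Zp(0;d)\otimes_{\Zp}\Zp(0;d')$ surjects onto $\Zp(0;d+d')$ (multiply the polynomials), and twisting by $\Cp(i+i')$. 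Hence if $W\otimes W'$ is semi-simple, then $d+d'=0$, forcing $d=d'=0$, i.e. $W$ and $W'$ are simple. The $\BdR$ case is formally identical, using that $\BdR[\{0\};d]\otimes_{\BdR}\BdR[\{0\};d']$ has $\BdR[\{0\};d+d']$ as a quotient via polynomial multiplication in $X=\log t$, and that a $\BdR$-representation is de Rham iff it is semi-simple with de Rham weights in $\Z$.

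The one point requiring a little care — and the main obstacle — is the bookkeeping of twists and of the ambient field of coefficients. The hypothesis only says $W$ and $W'$ have Sen (resp. de Rham) weights in $\Z$, not that each indecomposable summand is literally of the form $\Cp[i;d]$; one must invoke \cite[thm. 2.14]{Fon04} (resp. \cite[3.19]{Fon04}) to get that decomposition, and then verify that the tensor product of two such summands, being a subquotient argument on the $\Zp(0;d)$ factors, genuinely realizes $\Cp[i+i';d+d']$ (resp. $\BdR[\{0\};d+d']$) and that $d+d'=0$ already gives $d=0$ — which is automatic since $d,d'\geq 0$. No extension of coefficients is needed here, in contrast to Lemma~\ref{charwts}; the whole argument takes place over $\Cp$ and $\BdR$ after forgetting the $E$-structure, which is harmless since Hodge-Tate-ness and de Rham-ness of a $B^{\otimes E}_{|K}$-pair are tested on the underlying $\Cp$- and $\BdR$-representations.
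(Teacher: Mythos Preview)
Your argument is correct and takes a genuinely different route from the paper. Both proofs begin the same way, reducing via Fontaine's structure theorems \cite[2.14, 3.19]{Fon04} to a single tensor product of indecomposables. From there the paper argues by an explicit cohomological computation: assuming $d_1>0$ and that $U=\BdR[\{0\};d_1]\otimes\BdR[\{0\};d'_1]$ is de Rham, it writes $X\otimes 1$ in a $G_K$-fixed basis and extracts from the relation $g(X\otimes 1)=X\otimes 1+\log\chi(g)\,(1\otimes 1)$ an element $b_1\in\BdR$ with $g(b_1)-b_1=\log\chi(g)$, then rules this out by a case analysis on the $t$-adic valuation of $b_1$, invoking $H^1(G_K,\Cp)\neq 0$ and $\Cp(h)^{G_K}=0$ for $h<0$. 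Your approach instead observes that polynomial multiplication gives a $G_K$-equivariant surjection $\Zp(0;d)\otimes_{\Zp}\Zp(0;d')\twoheadrightarrow\Zp(0;d+d')$, hence $\Cp[i;d]\otimes_{\Cp}\Cp[i';d']$ (resp.\ its $\BdR$ analogue) has $\Cp[i+i';d+d']$ as a quotient; since quotients of Hodge-Tate (resp.\ de Rham) representations are Hodge-Tate (resp.\ de Rham), one gets $d+d'=0$ immediately. Your argument is shorter and more structural, bypassing the cohomology of $\Cp$ entirely; the paper's computation, on the other hand, makes explicit \emph{why} the obstruction lives in $H^1(G_K,\Cp)$, which is conceptually informative and is reused almost verbatim in the Schur-functor Lemma~\ref{dRschurlem1}.
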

\begin{proof}
Let $W$ and $W'$ be $\BdR$-representations of $G_K$ with de Rham weights in $\Z$. By Fontaine's theorem \cite[3.19]{Fon04}, $W$ and $W'$ admit unique decompositions $W\simeq\bigoplus_{i=1}^r \BdR[\{0\};d_i]^{e_i}$ and $W'\simeq\bigoplus_{j=1}^{r'} \BdR[\{0\}; d'_j]^{e'_j}$. The $\BdR$-representations $W$ and $W'$ are de Rham if and only if all of the $d_i$ and $d'_j$ are equal to zero. If $W\otimes_{\BdR}W'$ is de Rham, then $\BdR[\{0\};d_i]\otimes_{\BdR}\BdR[\{0\}; d'_j]$ is de Rham for every $1\leq i\leq r$ and $1\leq j\leq r'$.  Suppose, for example, that $W$ is not de Rham, so that we may assume $d_1>0$.  Let $U=\BdR[\{0\};d_1]\otimes_{\BdR}\BdR[\{0\};d'_1]$, let $e_1=1\otimes1$, and let $(e_1, e_2,\ldots, e_{f})$ be a $K$-basis of $D_{\dR}(U)=(\BdR\otimes_{\Qp} U)^{G_K}$, where $f=(d_1+1)(d'_1+1)$.  If $U$ is de Rham, then the element $X\otimes 1\in U$ may be written as a sum $X\otimes 1 = b_1(1\otimes 1) + \sum_{i=2}^{f} b_ie_i$ with $b_i\in\BdR$ for all $1\leq i\leq f$. Since $g(X\otimes 1) = X\otimes 1 + \log(\chi(g))(1\otimes 1)$ for all $g\in G_K$, we have $g(b_1) - b_1 = \log(\chi(g))$ for all $g\in G_K$.  If $b_1\in \BdRp$ then $g(\theta(b_1)) - \theta(b_1) = \log\chi(g)$ for all $g\in G_K$, which is impossible since $g\mapsto \log\chi(g)$ is a generator of the one-dimensional $K$-vector space $H^1(G_K,\Cp)$.  If $b_1\in t^h\BdRp$ for some $h<0$, then $b_1=t^hb'$ with $b'\in \BdRp\backslash t\BdRp$ and $\chi(g)^hg(b') -b'\in t^{-h}\BdRp\subset t\BdRp$, so that reducing modulo $t$ would imply that $\theta(b')\in \Cp(h)^{G_K}=\{0\}$, a contradiction. We therefore see that $W$ and $W'$ must be de Rham.

The same arguments together with Fontaine's theorem \cite[2.14]{Fon04} show that if $W$ and $W'$ are $\Cp$-representations with Sen weights in $\Z$ such that $W\otimes_{\Cp} W'$ is Hodge-Tate, then $W$ and $W'$ are Hodge-Tate.
\end{proof}

\begin{Thm}\label{dRtensor}
Let $W$ and $W'$ be $B^{\otimes E}_{|K}$-pairs.  If the $B^{\otimes E}_{|K}$-pair $W\otimes W'$ is Hodge-Tate, then there is a finite extension $F/E$ and a character $\mu:G_K\to F^{\times}$ such that the $B^{\otimes F}_{|K}$-pairs $W(\mu^{-1})$ and $W'(\mu)$ are Hodge-Tate.  If, moreover, $W\otimes W'$ is de Rham, then so are $W(\mu^{-1})$ and $W'(\mu)$.
\end{Thm}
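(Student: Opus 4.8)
The plan is to reduce the assertion to the level of $\CpE{F}$-- and $\BdRE{F}$--representations, where it becomes a weight computation followed by an application of Lemma \ref{dRtensorlem1}. We may clearly assume $W$ and $W'$ are nonzero. First I would enlarge $E$ to a finite extension $F$ with $F\supseteq K^{\Gal}$ and such that $F$ contains every $h$-weight of $\ol{W}:=W_{\dR}^+/tW_{\dR}^+$ and of $\ol{W'}:=W'^+_{\dR}/tW'^+_{\dR}$ for each of the $r=[K:\Qp]$ embeddings $h_1,\dots,h_r\colon K\to F$; by paragraph \ref{coefficientsdecomposition} and Proposition \ref{thetas}(ii) this changes neither the Hodge--Tate nor the de Rham status of any of the $B$-pairs in sight. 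Since $W\otimes W'$ is Hodge--Tate, $\ol{W\otimes W'}\simeq \ol W\otimes_{\CpE{F}}\ol{W'}$, and Proposition \ref{thetas}(iii) gives $\poids^{h_i}(\ol{W\otimes W'})=\{\,a+a' : a\in\poids^{h_i}(\ol W),\ a'\in\poids^{h_i}(\ol{W'})\,\}\subseteq\Z$. In particular, for each $i$ any two $h_i$-weights of $\ol W$ are congruent modulo $\Z$, and likewise for $\ol{W'}$.

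Next I would manufacture the twisting character. Choose $\omega_i\in\poids^{h_i}(\ol W)$ for each $i$ (possible since $W\neq 0$). By Lemma \ref{charwts}, after possibly enlarging $F$ further there is a character $\mu\colon G_K\to F^\times$ with $\poids^{h_i}(F(\mu))=\{\omega_i\}$ for all $i$. Applying Proposition \ref{thetas}(iii) once more, $\poids^{h_i}(\ol{W(\mu^{-1})})=\{\,a-\omega_i : a\in\poids^{h_i}(\ol W)\,\}\subseteq\Z$ by the congruence just observed, while $\poids^{h_i}(\ol{W'(\mu)})=\{\,a'+\omega_i : a'\in\poids^{h_i}(\ol{W'})\,\}\subseteq\Z$ because $\omega_i+a'$ is an $h_i$-weight of $\ol{W\otimes W'}$. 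Hence every $h_i$-weight, and therefore every Sen weight, of $\ol{W(\mu^{-1})}$ and of $\ol{W'(\mu)}$ lies in $\Z$.

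Because $\mu^{-1}\mu$ is trivial, $W(\mu^{-1})\otimes W'(\mu)\simeq F\otimes_E(W\otimes W')$, so its reduction mod $t$ is a Hodge--Tate $\CpE{F}$-representation. Decomposing along the embeddings of $F$ into $\Qpbar$ (paragraph \ref{coefficientsdecomposition} and Lemma \ref{dRtensorlem0}), each $\Cp$-component of $\ol{W(\mu^{-1})}$ and of $\ol{W'(\mu)}$ has Sen weights in $\Z$, and their component-wise tensor product is Hodge--Tate, so Lemma \ref{dRtensorlem1} forces every such component — hence $\ol{W(\mu^{-1})}$ and $\ol{W'(\mu)}$ — to be Hodge--Tate; that is, $W(\mu^{-1})$ and $W'(\mu)$ are Hodge--Tate $B$-pairs. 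For the de Rham refinement, integrality of the Sen weights of $\ol{W(\mu^{-1})}$ says (via Proposition \ref{weightintegrality}, computed with the lattice $W(\mu^{-1})_{\dR}^+$) that the de Rham weights of $W(\mu^{-1})_{\dR}$ all lie in $\Z$, and the same for $W'(\mu)_{\dR}$. Since $W(\mu^{-1})_{\dR}\otimes_{\BdRE{F}}W'(\mu)_{\dR}\simeq F\otimes_E(W\otimes W')_{\dR}$ is de Rham, the second half of Lemma \ref{dRtensorlem1}, applied to the $\BdR$-components along the embeddings of $F$, shows that both $W(\mu^{-1})_{\dR}$ and $W'(\mu)_{\dR}$ are de Rham, i.e. $W(\mu^{-1})$ and $W'(\mu)$ are de Rham.

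The only steps requiring genuine care, as opposed to bookkeeping, are the construction of $\mu$ with the prescribed $h$-weights — which is exactly the content of Lemma \ref{charwts} — and the passage between ``integral $h$-weight for every embedding $h\colon K\to F$'' and ``integral Sen weight of the underlying $\Qp$-representation,'' which is what makes Fontaine's structure theorem, and hence Lemma \ref{dRtensorlem1}, applicable. Once the twist by $\mu$ has made all weights integral, the remaining difficulty — excluding non-split constituents of the form $\Cp[i;d]$ or $\BdR[\{0\};d]$ with $d>0$ — has been entirely isolated in Lemma \ref{dRtensorlem1}.
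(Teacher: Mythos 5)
Your proof is correct and follows essentially the same route as the paper's: twist by a character matching one chosen $h$-weight of $\ol W$ (Lemma \ref{charwts}), observe that all $h$-weights of $\ol{W(\mu^{-1})}$ and $\ol{W'(\mu)}$ become integral, then decompose along the embeddings of $F$ and apply Lemmas \ref{dRtensorlem0} and \ref{dRtensorlem1}. The only point you leave implicit is that the decomposition of paragraph \ref{coefficientsdecomposition} requires first restricting to $G_F$ (harmless, since Hodge--Tate and de Rham are insensitive to restriction to a finite extension), a step the paper makes explicit.
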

\begin{proof}
Let $W$ and $W'$ be $B^{\otimes E}_{|K}$-pairs and suppose that the $B^{\otimes E}_{|K}$-pair $W\otimes W'$ is Hodge-Tate.  By extending scalars if necessary, we may suppose that $E$ is finite Galois and contains $K$, so that the methods of paragraph \ref{sentheory} apply.

Let $r=\rank(W)$ and let $r'=\rank(W')$. For each embedding $h:K\to E$, let $a_{1,h},\ldots, a_{r,h}$ denote the $h$-weights of the $\CpE{E}$-representation $\ol{W}$ and let $a'_{1,h},\ldots, a'_{r',h}$ denote the $h$-weights of $\ol{W'}$ as in paragraph \ref{sentheory}.  Part (iii) of proposition \ref{thetas} implies that if $h:K\to E$ is an embedding, then the $h$-weights of $\ol{W\otimes W'}$ are the elements $a_{i,h} + a'_{j,h}$ for $1\leq i\leq r$ and $1\leq j\leq r'$, which are integers since the $\CpE{E}$-representation $\ol{W\otimes W'} = \ol{W}\otimes_{\CpE{E}} \ol{W'}$ is Hodge-Tate. By lemma \ref{charwts}, there is a finite extension $F/E$ and a character $\mu:G_K\to F^\times$ such that for all embeddings $h:K\to E\subset F$, the $h$-weight of the $\CpE{F}$-representation $\ol{W(F(\mu))}$ is $a_{1,h}$.  We may suppose that $F/E$ is Galois.

We now show that the $B^{\otimes F}_{|K}$-pairs $W(\mu^{-1})$ and $W'(\mu)$ are Hodge-Tate.  If $h:K\to E\subset F$ is an embedding, then (ii) and (iii) of proposition \ref{thetas} imply that the $h$-weights of $W(\mu^{-1})$ are the integers $a_{i,h}-a_{1,h}$ (for $1\leq i\leq r$) and the $h$-weights of $W'(\mu)$ are the integers $a_{1,h} + a'_{j,h}$ for $1\leq j\leq r'$.  Since being Hodge-Tate is the same as being potentially Hodge-Tate, it suffices to show that the $B^{\otimes F}_{|F}$-pairs $W(\mu^{-1})|_{G_F}$ and $W'(\mu)|_{G_F}$ are Hodge-Tate.  Let $\ol{W(\mu^{-1})} = \bigoplus_{h:F\to F} \ol{W(\mu^{-1})}_h$ and $\ol{W'(\mu)} = \bigoplus_{h:F\to F} \ol{W'(\mu)}_h$ be the decompositions of $\CpE{F}$-representations of $G_F$ as described in paragraph \ref{coefficientsdecomposition}. The $\Cp$-representations $\ol{W(\mu^{-1})}_h$ and $\ol{W'(\mu)}_h$ have weights in $\Z$ for every $h$. The isomorphism \[\ol{W(\mu^{-1})\otimes W'(\mu)}\simeq \bigoplus_{h:F\to F}  \ol{W(\mu^{-1})}_h\otimes_{\Cp} \ol{W'(\mu)}_h\] of $\Cp$-representations of $G_F$ as in lemma \ref{dRtensorlem0} implies that $\ol{W(\mu^{-1})}_h\otimes_{\Cp} \ol{W'(\mu)}_h$ is Hodge-Tate for each embedding $h:F\to F$. By lemma \ref{dRtensorlem1}, $\ol{W(\mu^{-1})}_h$ and $\ol{W'(\mu)}_h$ are Hodge-Tate for each embedding $h:F\to F$, and therefore $\ol{W(\mu^{-1})}$ and $\ol{W'(\mu)}$ are Hodge-Tate. Therefore, the $B^{\otimes F}_{|K}$-pairs $W(\mu^{-1})$ and $W'(\mu)$ are Hodge-Tate.

Suppose now that $W$ and $W'$ are $B^{\otimes E}_{|K}$-pairs and that the $B^{\otimes E}_{|K}$-pair $W\otimes W'$ is de Rham. By the above, there is a finite extension $F/E$ and a character $\mu:G_K\to F^\times$ such that the $B^{\otimes F}_{|K}$-pairs $W(\mu^{-1})$ and $W'(\mu)$ are Hodge-Tate.  We now show that $W(\mu^{-1})$ and $W'(\mu)$ are de Rham. It suffices to show that the restrictions of $W(\mu^{-1})$ and $W'(\mu)$ to $G_F$ are de Rham. Let $W(\mu^{-1})_{\dR}= \bigoplus_{h:F\to F} W(\mu^{-1})_{\dR,h}$ and $W'(\mu)_{\dR} = \bigoplus_{h:F\to F} W'(\mu)_{\dR,h}$ be the decompositions of $\BdR$-representations of $G_F$ as in paragraph \ref{coefficientsdecomposition}. For each embedding $h:F\to F$, the $\BdR$-representations $W(\mu^{-1})_{\dR,h}$ and $W'(\mu)_{\dR,h}$ have de Rham weights in $\Z$. By lemma \ref{dRtensorlem0}, the $\BdR$-representation $W(\mu^{-1})_{\dR,h}\otimes_{\BdR} W'(\mu)_{\dR,h}$ is de Rham for each embedding $h:F\to F$, and therefore so are $W(\mu^{-1})_{\dR,h}$ and $W'(\mu)_{\dR,h}$ by lemma \ref{dRtensorlem1}.  Therefore, the $B^{\otimes F}_{|K}$-pairs $W(\mu^{-1})$ and $W'(\mu)$ are de Rham.
\end{proof}

\begin{Cor}\label{dRtensorreps}
Let $E/\Qp$ and $K/\Qp$ be finite extensions, and let $V$ and $V'$ be two $E$-linear representations of $G_K$.
If $V\otimes_E V'$ is Hodge-Tate, then there is a finite extension $F/E$ and a character $\mu:G_K\to F^\times$ such that $V(\mu^{-1})$ and $V'(\mu)$ are Hodge-Tate. If $V\otimes_E V'$ is de Rham, then so are $V(\mu^{-1})$ and $V'(\mu)$.
\end{Cor}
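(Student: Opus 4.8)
The plan is to deduce the corollary formally from Theorem~\ref{dRtensor} by transporting the hypothesis and the conclusion through the functor $V\mapsto W(V)$. Recall from the introduction and from \S\ref{coefficientsdecomposition} that $W(-)$ is a tensor functor from $E$-linear representations of $G_K$ to $B^{\otimes E}_{|K}$-pairs, that it is compatible with extension of coefficients and with twisting by characters, and that an $E$-linear representation $U$ of $G_K$ is Hodge-Tate (resp. de Rham) if and only if $W(U)$ is. These are exactly the compatibilities needed, and none of them requires a new argument.

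First I would form the $B^{\otimes E}_{|K}$-pairs $W(V)$ and $W(V')$ and use the tensor isomorphism $W(V)\otimes W(V')\simeq W(V\otimes_E V')$. Since $V\otimes_E V'$ is Hodge-Tate (resp. de Rham) by hypothesis, the right-hand side is a Hodge-Tate (resp. de Rham) $B^{\otimes E}_{|K}$-pair, hence so is $W(V)\otimes W(V')$. Applying Theorem~\ref{dRtensor} to the pair $(W(V),W(V'))$ then produces a finite extension $F/E$ and a character $\mu:G_K\to F^\times$ such that the $B^{\otimes F}_{|K}$-pairs $W(V)(\mu^{-1})$ and $W(V')(\mu)$ are Hodge-Tate, and moreover de Rham whenever $W(V)\otimes W(V')$ is, i.e.\ whenever $V\otimes_E V'$ is de Rham. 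The same $\mu$ serves for both parts of the statement.

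It then remains to descend back to representations. Since $\mu$ takes values in $F^\times$, the object $V(\mu^{-1})$ is by definition the $F$-linear representation $(F\otimes_E V)\otimes_F\mu^{-1}$, and one has canonical isomorphisms of $B^{\otimes F}_{|K}$-pairs $W(V)(\mu^{-1})\simeq W\bigl(V(\mu^{-1})\bigr)$ and $W(V')(\mu)\simeq W\bigl(V'(\mu)\bigr)$, coming from the compatibility of $W(-)$ with base change and twists. Because $W(-)$ detects the Hodge-Tate (resp. de Rham) property, the previous step shows that $V(\mu^{-1})$ and $V'(\mu)$ are Hodge-Tate (resp. de Rham), which is the assertion. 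I do not expect any genuine obstacle: the only thing to check is the bookkeeping that the twist-and-base-change operations on $B$-pairs correspond under $W(-)$ to those on representations, which is immediate from the definitions in \S\ref{coefficientsdecomposition} and the tensor-functoriality of $W(-)$.
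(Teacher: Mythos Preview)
Your argument is correct and is exactly the intended deduction: the paper states this result as an immediate corollary of Theorem~\ref{dRtensor} with no separate proof, relying precisely on the tensor-compatibility of $W(-)$ and the fact that $V$ is Hodge-Tate (resp.\ de Rham) if and only if $W(V)$ is. There is nothing to add.
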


\subsection{Hodge-Tate and de Rham Schur $B$-pairs}
In what follows, let $n\geq 1$ be an integer and let $u=(u_1,\ldots,u_r)$ denote an integer partition $n=u_1+\ldots + u_r$ ($u_i\geq u_{i+1}\geq 1$) of $n$.  If $u_1=\ldots = u_r$, put $r(u) = r+1$. Otherwise, put $r(u)=r$.

\begin{Lem}\label{dRschurlem1}
If $W$ is a $\Cp$-representation of $G_K$ having Sen weights in $\Z$ such that $\dim_{\Cp}(W)\geq r(u)$ and $\Schur^u(W)$ is Hodge-Tate, then $W$ is Hodge-Tate.

If $W$ is a $\BdR$-representation of $G_K$ having de Rham weights in $\Z$ such that $\dim_{\BdR}(W)\geq r(u)$ and $\Schur^u(W)$ is de Rham, then $W$ is de Rham.
\end{Lem}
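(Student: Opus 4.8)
The plan is to reduce both statements to a single fact about nilpotent operators: \emph{if $N$ is a nilpotent endomorphism of a finite-dimensional vector space $V$ over a field of characteristic $0$, $\dim V\geq r(u)$, and $N\neq 0$, then the endomorphism $\Schur^u(N)$ that $N$ induces on $\Schur^u(V)$ (the derivative at $1$ of $t\mapsto\Schur^u(\exp(tN))$) is nonzero.} To see how this implies the $\Cp$-statement: since $W$ has Sen weights in $\Z$, Fontaine's theorem \cite[2.14]{Fon04} gives $W\simeq\bigoplus_k\Cp[i_k;d_k]$, and on $D_{\sen}(W)$ the operator $\Theta_W$ is, in a suitable basis, a direct sum of Jordan blocks, one of size $d_k+1$ and eigenvalue $i_k$ for each summand $\Cp[i_k;d_k]$; thus $W$ is Hodge-Tate iff all $d_k=0$ iff $\Theta_W$ is semisimple. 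By Corollary \ref{Schurweights} the Sen weights of $\Schur^u(W)$ are again in $\Z$, so likewise $\Schur^u(W)$ is Hodge-Tate iff $\Theta_{\Schur^u(W)}$ is semisimple. Since $\Schur^u(W)$ is a sub-quotient of $W^{\otimes n}$, parts (i) and (iii) of Proposition \ref{thetas} identify $\Theta_{\Schur^u(W)}$ with the endomorphism of $\Schur^u(D_{\sen}(W))$ induced by $\Theta_W$.

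I would then write the Jordan decomposition $\Theta_W=S+N$ over $K_\infty\otimes E$ (a finite product of fields of characteristic $0$); since $\Schur^u$ is a rational representation of $\GL(V)$, the operators induced by $S$ and by $N$ on $\Schur^u(D_{\sen}(W))$ are respectively semisimple and nilpotent and commute, so decomposing into eigenspaces of the former shows that $\Theta_{\Schur^u(W)}$ is semisimple iff $\Schur^u(N)=0$. Hence $\Schur^u(W)$ is Hodge-Tate iff $\Schur^u(N)=0$, where $N$, the nilpotent part of $\Theta_W$, is nonzero exactly when $W$ is not Hodge-Tate; applying the fact above (after base change to an algebraic closure) finishes this case. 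The $\BdR$-statement is formally identical: by \cite[3.19]{Fon04}, $W$ having de Rham weights in $\Z$ means $W\simeq\BdR\otimes_{\Zp}M$ with $M=\bigoplus_i\Zp(0;d_i)^{e_i}$, so $\Schur^u(W)\simeq\BdR\otimes_{\Zp}\Schur^u(M)$, and $W$ (resp.\ $\Schur^u(W)$) is de Rham iff the nilpotent operator induced by the $G_K$-action on $M\otimes_{\Zp}\Qp$ (resp.\ on $\Schur^u(M)\otimes_{\Zp}\Qp$) vanishes; apply the fact again, with $N$ this operator.

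To prove the fact, observe that $\Schur^u(N)=0$ iff the unipotent $\exp(N)$ lies in the kernel of the rational representation $\Schur^u\colon\GL(V)\to\GL(\Schur^u(V))$ (the cyclic group it generates being Zariski-dense in $\{\exp(tN)\}_t$). As $\dim V\geq r(u)\geq r$, this is an irreducible representation of highest weight $(u_1,\ldots,u_r,0,\ldots,0)$, which is not a scalar multiple of $(1,\ldots,1)$: if $Y_u$ is a rectangle then $\dim V\geq r+1$ leaves a trailing zero while $u_1>0$, and otherwise $u_1>u_r$. So $\Schur^u$ is nontrivial on $\mathrm{SL}(V)$, its kernel is a proper closed normal subgroup of $\GL(V)$, the identity component of this kernel is trivial or the center $\mathbf{G}_m$, and the center is excluded since it acts by $z\mapsto z^n$ with $n\geq1$; hence the kernel is finite, central, and contains no unipotent but $1$. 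Therefore $\exp(N)=1$, so $N=0$, a contradiction. (Alternatively one argues by hand with the chain of standard tableaux $T_1,\ldots,T_d$ of \S\ref{schurnotation}, choosing a Jordan basis of $N$ whose longest string carries out one of the chain's elementary moves and checking that $e_{T_j}$ survives with nonzero coefficient in $\Schur^u(N)(e_{T_{j+1}})$.) This last fact is the crux of the argument, and the one place where the precise bound $r(u)$ and the rectangle/non-rectangle dichotomy are needed; the representation-theoretic version settles it at once, whereas the combinatorial one requires some care with the straightening relations for non-standard tableaux, which is exactly what the existence of the chain (and $\dim V \geq r(u)$) is designed to avoid.
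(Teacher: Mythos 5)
Your proof is correct, but it takes a genuinely different route from the paper's. The paper argues by contradiction from Fontaine's classification: it splits off an indecomposable summand $W=\BdR[\{0\};d]\oplus W''$ with $d>0$, decomposes $\Schur^u(W)$ by the Littlewood--Richardson rule into summands $\Schur^{\lambda}(\BdR[\{0\};d])\otimes\Schur^{\mu}(W'')$, chooses $(\lambda,\mu)$ with $c^u_{\lambda,\mu}>0$ and $d+1\geq r(\lambda)$, invokes Lemma \ref{dRtensorlem1} to conclude that $\Schur^{\lambda}(\BdR[\{0\};d])$ alone would have to be de Rham, and then refutes this by an explicit tableau computation: for the standard tableau $T$ and the tableau $T'$ obtained by incrementing the bottom cell of the right-most column, $g(e_{T'})=e_{T'}+\nu\log\chi(g)\,e_{T}$, so the nontriviality of the class of $\log\chi$ obstructs semi-simplicity. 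You instead linearize everything: after the (correct) reductions via Sen's criterion, Proposition \ref{thetas}, and the two classification theorems, both statements become ``the derived action of a nonzero nilpotent $N$ on $\Schur^u(V)$ is nonzero when $\dim V\geq r(u)$,'' which you settle by the representation theory of $\GL_d$ --- irreducibility of the Schur module in characteristic zero, and the fact that the kernel of an irreducible representation whose highest weight is not a multiple of $(1,\dots,1)$ is finite and central, hence contains no nontrivial unipotent. What the paper's approach buys is that it stays within the combinatorial toolkit of \S\ref{schurnotation} and reuses the cocycle argument of Lemma \ref{dRtensorlem1}; what yours buys is that it bypasses both the Littlewood--Richardson decomposition and the straightening of non-standard tableaux (the delicate point in the combinatorial variant, as you note), and it exhibits the bound $\dim V\geq r(u)$ as exactly the condition that the highest weight $(u_1,\dots,u_r,0,\dots,0)$ is not proportional to $(1,\dots,1)$. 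Both proofs are sound; yours is arguably more conceptual, at the cost of importing the structure theory of $\GL_d$.
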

\begin{proof}
Let $W$ be a $\BdR$-representation of $G_K$ having de Rham weights in $\Z$ such that $\dim_{\BdR}(W)\geq r(u)$.  If $W$ is not de Rham, then Fontaine's theorem \cite[3.19]{Fon04} gives a decomposition $W=\BdR[\{0\};d]\oplus W'$ for some $d>0$, so that 
\[\Schur^u(W) \simeq \bigoplus_{\lambda,\mu} (\Schur^{\lambda}(\BdR[\{0\};d])\otimes_{\BdR} \Schur^{\mu}(W'))^{\oplus c_{\lambda,\mu}^u}\]
as a $\BdR$-representation of $G_K$, where $c_{\lambda,\mu}^u \geq 0$ denotes the Littlewood-Richardson number. There are $\lambda$ and $\mu$ such that $\Schur^{\lambda}(\BdR[\{0\};d])\otimes_{\BdR} \Schur^{\mu}(W')$ is non-zero, and such that $d+1\geq r(\lambda)$. This can be seen by using the fact that $c_{\lambda,\mu}^u$ is equal to the number of pairs of tableaux $T$ of shape $\lambda$ and $U$ of shape $\mu$ such that the product tableau $T\cdot U$ is equal to the standard tableau $T_1$ on the Young diagram of $u$.

  The $\BdR$-representations $\Schur^{\lambda}(\BdR[\{0\};d])$ and $\Schur^{\mu}(W')$ have de Rham weights in $\Z$ by lemma \ref{thetas}.  If $\Schur^u(W)$ is de Rham, then so is $\Schur^{\lambda}(\BdR[\{0\};d])\otimes_{\BdR} \Schur^{\mu}(W')$ and lemma \ref{dRtensorlem1} implies that $\Schur^{\lambda}(\BdR[\{0\};d])$ is de Rham.  Let $(1,X,X^2,\ldots,X^d)$ denote the standard $\BdR$-basis of $\BdR[\{0\};d]$.  If $T$ is the tableau having $i$ in the $i$-th row, then the element $e_T\in \Schur^\lambda(\BdR[\{0\};d])$ is such that $g(e_T) = e_T$ for all $g\in G_K$. Let $T'$ be the tableau with values in $\{1,\ldots, d+1\}$ which is obtained from $T$ by adding 1 to the value in the bottom-most cell of the right-most column; this tableau $T'$ exists since $d+1\geq r(\lambda)$.  A calculation shows that $g(e_{T'}) = e_{T'} + \nu\log\chi(g) e_{T}$, where $\nu$ is the length of the right-most column.  If $\Schur^\lambda(\BdR[\{0\};d])$ is de Rham, then it admits a basis $(e_T,e_2,\ldots, e_f)$ of elements such that for all $i=2,\ldots, f$, $g(e_i)=e_i$ for all $g\in G_K$.  If $b_1,\ldots, b_f\in \BdR$ are elements such that $e_{T'}=b_1e_T + \sum_{i\geq 2} b_ie_i$, then $g(b_1) - b_1 = \nu\log\chi(g)$ for all $g\in G_K$, which is impossible.  Therefore, $W$ and $W'$ must be de Rham.

One can prove the claim for $\Cp$-representations by using Fontaine's theorem \cite[2.14]{Fon04} and applying the same arguments.
\end{proof}

\begin{Thm}\label{dRschur}
Let $W$ be a $B^{\otimes E}_{|K}$-pair such that $\rank(W)\geq r(u)$.  If the $B^{\otimes E}_{|K}$-pair $\Schur^u(W)$ is Hodge-Tate, then there is a finite extension $F/E$ and a character $\mu:G_K\to F^\times$ such that the $B^{\otimes F}_{|K}$-pair $W(\mu^{-1})$ is Hodge-Tate.  If, moreover, $\Schur^u(W)$ is de Rham, then $W(\mu^{-1})$ is de Rham.
\end{Thm}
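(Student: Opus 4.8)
The plan is to run the argument of Theorem~\ref{dRtensor} with the tensor product replaced by the Schur functor $\Schur^u(-)$, feeding in Corollary~\ref{Schurweights} and the combinatorics of tableaux from~\S\ref{schurnotation}. First I would extend scalars to reduce to the case where $E$ is finite Galois over $\Qp$ and contains $K$, so that Sen's theory of~\S\ref{sentheory} applies. Write $d=\rank(W)$, and for each embedding $h:K\to E$ let $a_{1,h},\dots,a_{d,h}$ be the $h$-weights of the $\CpE{E}$-representation $\ol{W}=W_{\dR}^+/tW_{\dR}^+$. By Corollary~\ref{Schurweights} the $h$-weights of $\Schur^u(W)$ are the numbers $a_{T,h}=\sum_{i,j}a_{t_{ij},h}$, as $T$ ranges over the tableaux on $Y_u$ with values in $\{1,\dots,d\}$, and all of these lie in $\Z$ since $\Schur^u(W)$ is Hodge-Tate.

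The decisive step --- and the only place where the hypothesis $\rank(W)\ge r(u)$ is used --- is to deduce from this the integrality of the individual weight differences. Since $d\ge r(u)$, the chain of tableaux $T_1,\dots,T_d$ of~\S\ref{schurnotation} exists, and for every $i\in\{1,\dots,d-1\}$ some consecutive pair $T_j,T_{j+1}$ agrees in all boxes but one, where $T_j$ has $i$ and $T_{j+1}$ has $i+1$; hence $a_{T_{j+1},h}-a_{T_j,h}=a_{i+1,h}-a_{i,h}$ is an integer, and telescoping yields $a_{i,h}-a_{1,h}\in\Z$ for all $i$. Evaluating on the standard tableau gives $\sum_i u_i\,a_{i,h}=a_{T_1,h}\in\Z$, and combined with the previous relation this forces $n\,a_{1,h}\in\Z$, where $n=u_1+\dots+u_r$.

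From here one argues exactly as in Theorem~\ref{dRtensor}. Lemma~\ref{charwts} provides a finite Galois extension $F/E$ and a character $\mu:G_K\to F^\times$ with $\poids^h(F(\mu))=\{a_{1,h}\}$ for every $h:K\to E$, so the $h$-weights of $W(\mu^{-1})$ are the integers $a_{i,h}-a_{1,h}$. Being Hodge-Tate is equivalent to being potentially Hodge-Tate, so it suffices to treat $W(\mu^{-1})|_{G_F}$; decompose $\ol{W(\mu^{-1})}=\bigoplus_{h:F\to F}\ol{W(\mu^{-1})}_h$ as in~\S\ref{coefficientsdecomposition}, each summand being a $\Cp$-representation with integer Sen weights and of dimension $d\ge r(u)$. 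Because $n\,a_{1,h}\in\Z$, the character $\mu^{-n}$ has integer Sen weights, hence is Hodge-Tate, so $\Schur^u(W(\mu^{-1}))\simeq\Schur^u(W)(\mu^{-n})$ is Hodge-Tate; by Lemma~\ref{dRschurlem0} applied to $\ol{W(\mu^{-1})}|_{G_F}$, each $\Schur^u(\ol{W(\mu^{-1})}_h)$ is a direct summand of the Hodge-Tate representation $\ol{\Schur^u(W(\mu^{-1}))}|_{G_F}$ and is therefore Hodge-Tate, so Lemma~\ref{dRschurlem1} shows $\ol{W(\mu^{-1})}_h$ is Hodge-Tate for every $h$, and $W(\mu^{-1})$ is Hodge-Tate. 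For the de Rham assertion one repeats this verbatim at the level of $\BdR$-representations: if $\Schur^u(W)$ is de Rham then, $\mu^{-n}$ being a Hodge-Tate and hence de Rham character, $\Schur^u(W(\mu^{-1}))\simeq\Schur^u(W)(\mu^{-n})$ is de Rham; decomposing $W(\mu^{-1})_{\dR}=\bigoplus_h W(\mu^{-1})_{\dR,h}$ over $G_F$ --- each factor having de Rham weights in $\Z$ and dimension $\ge r(u)$ --- Lemmas~\ref{dRschurlem0} and~\ref{dRschurlem1} give that each $W(\mu^{-1})_{\dR,h}$ is de Rham, whence $W(\mu^{-1})$ is de Rham.

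I expect the main obstacle to be the second paragraph: extracting the integrality of the single differences $a_{i,h}-a_{1,h}$, and of $n\,a_{1,h}$, from the integrality of the many, strongly interlinked weights $a_{T,h}$ of $\Schur^u(W)$ is exactly where the bound $\rank(W)\ge r(u)$ must be exploited, through the tableau chain $T_1,\dots,T_d$. A secondary point deserving a word of care is the claim that the twisting character $\mu^{-n}$, having integer Sen weights, is de Rham rather than merely Hodge-Tate; this is standard for characters and can in any case be read off from the explicit Lubin-Tate construction of $\mu$ in Lemma~\ref{charwts}.
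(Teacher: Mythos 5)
Your proposal is correct and follows essentially the same route as the paper: extend scalars, use Corollary \ref{Schurweights} and the tableau chain $T_1,\dots,T_d$ of \S\ref{schurnotation} to get $a_{i,h}-a_{1,h}\in\Z$, twist by the character from Lemma \ref{charwts}, restrict to $G_F$, and conclude via Lemmas \ref{dRschurlem0} and \ref{dRschurlem1}. Your only addition is to make explicit the observation that $n\,a_{1,h}\in\Z$ so that $\Schur^u(W(\mu^{-1}))\simeq\Schur^u(W)(\mu^{-n})$ is again Hodge--Tate (resp.\ de Rham); the paper leaves this step implicit, and spelling it out is a genuine (if minor) improvement in completeness.
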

\begin{proof}
Let $W$ be a $B^{\otimes E}_{|K}$-pair such that $d=\rank(W)\geq r(u)$ and suppose that $\Schur^u(W)$ is Hodge-Tate. By extending scalars if necessary, we may suppose that $E/\Qp$ is finite Galois and contains $K$. 

If $h:K\to E$ is an embedding, then let $a_{1,h},\ldots, a_{d,h}$ denote the $h$-weights of $\ol{W}$. By corollary \ref{Schurweights}, the $h$-weights of the $\CpE{E}$-representation $\ol{\Schur^u(W)}=\Schur^u(\ol{W})$ are the elements of the form $a_{T,h}=\sum a_{t_{ij},h}$ for any tableau $T=(t_{ij})$ with values in $\{1,\ldots, d\}$ on the Young diagram of $u$.  Since $\Schur^u(W)$ is Hodge-Tate, the elements $a_T$ are in $\Z$.  Since $d=\rank(W)\geq r(u)$, considering the tableaux $T_1,\ldots,T_d$ in paragraph \ref{schurnotation} allows us to conclude that $a_{i,h} - a_{1,h}\in \Z$ for all $1\leq i\leq d$.  By lemma \ref{charwts}, there is a finite Galois extension $F/E$ and a character $\mu:G_K\to F^\times$ such that the $B^{\otimes F}_{|K}$-pair $W(F(\mu))$ has $a_{1,h}$ has its $h$-weight for each embedding $h:K\to E\subset F$.

We now show that the $B^{\otimes F}_{|K}$-pair $W(\mu^{-1})$ is Hodge-Tate. It suffices to show that the restriction of $W(\mu^{-1})$ to $G_F$ are Hodge-Tate. Let $\ol{W(\mu^{-1})}=\bigoplus_{h:F\to F} \ol{W(\mu^{-1})}_h$ be the decomposition as a $\Cp$-representation of $G_F$ as described in paragraph \ref{coefficientsdecomposition}.  The $\Cp$-representation $\ol{W(\mu^{-1})}_h$ has Sen weights in $\Z$ for each embedding $h:F\to F$. By lemma \ref{dRschurlem0}, the $\Cp$-representation $\Schur^u(\ol{W(\mu^{-1})}_h)$ of $G_F$ is Hodge-Tate for each embedding $h:F\to F$. Since $\dim_{\Cp} \ol{W(\mu^{-1})}_h= \rank(W) \geq r(u)$, lemma \ref{dRschurlem1} implies that $\ol{W(\mu^{-1})}_h$ is Hodge-Tate for each embedding $h:F\to F$. The $B^{\otimes F}_{|K}$-pair $W(\mu^{-1})$ is therefore Hodge-Tate.

Suppose now that $W$ is a $B^{\otimes E}_{|K}$-pair such that $\rank(W)\geq r(u)$ and $\Schur^u(W)$ is de Rham. There is a finite extension $F/E$ and a character $\mu:G_K\to F^\times$ such that the $B^{\otimes E}_{|K}$-pair $W(\mu^{-1})$ is Hodge-Tate. We now show that $W(\mu^{-1})$ is de Rham. Let $W(\mu^{-1})_{\dR}\simeq \bigoplus_{h:F\to F} W(\mu^{-1})_{\dR,h}$ be the decomposition as a $\BdR$-representation of $G_F$ as described in paragraph \ref{coefficientsdecomposition}. The $\BdR$-representation $W(\mu^{-1})_{\dR,h}$ has de Rham weights in $\Z$ for each embedding $h:F\to F$.  By lemma \ref{dRschurlem0}, the $\Schur^u( W(\mu^{-1})_{\dR,h})$ is a de Rham $\BdR$-representation of $G_F$ for each embedding $h:F\to F$ and therefore $W(\mu^{-1})_{\dR,h}$ is de Rham for each embedding $h$ since $\dim_{\BdR} W(\mu^{-1})_{\dR,h}= \rank(W) \geq r(u)$.  Therefore, the $B^{\otimes F}_{| K}$-pair $W(\mu^{-1})$ is de Rham.
\end{proof}

\begin{Cor}\label{HTschurreps}
Let $n\geq 1$ be an integer, let $u$ be a partition of $n$, and let $V$ be an $E$-linear representation of $G_K$ such that $\dim_E(V)\geq r(u)$.  If $\Schur^u(V)$ is Hodge-Tate, then there is a finite extension $F/E$ and a character $\mu:G_K\to F^\times$ such that $V(\mu^{-1})$ is Hodge-Tate. If, moreover, $\Schur^u(V)$ is de Rham, then $V$ is de Rham.
\end{Cor}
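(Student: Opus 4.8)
The plan is to read the corollary off Theorem~\ref{dRschur} through the dictionary between $E$-linear representations of $G_K$ and $B^{\otimes E}_{|K}$-pairs. Recall that $V$ is Hodge-Tate (resp.\ de Rham) if and only if the associated pair $W(V)$ is, that $W(V(\mu^{\pm1}))=W(V)(\mu^{\pm1})$, and that there is a natural isomorphism $\Schur^u(W(V))\riso W(\Schur^u(V))$. Thus if $\Schur^u(V)$ is Hodge-Tate (resp.\ de Rham), then the $B$-pair $\Schur^u(W(V))$ is Hodge-Tate (resp.\ de Rham), and since $\rank(W(V))=\dim_E V\geq r(u)$ I may apply Theorem~\ref{dRschur} to $W=W(V)$. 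This produces a finite extension $F/E$ and a character $\mu:G_K\to F^\times$ such that the $B^{\otimes F}_{|K}$-pair $W(V)(\mu^{-1})=W(V(\mu^{-1}))$ is Hodge-Tate; translating back through the dictionary, $V(\mu^{-1})$ is Hodge-Tate, which is the first assertion.

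Now suppose in addition that $\Schur^u(V)$ is de Rham. Theorem~\ref{dRschur} then gives, for the same $\mu$, that $W(V(\mu^{-1}))$ is de Rham, i.e.\ $V(\mu^{-1})$ is de Rham. To upgrade this to de Rham-ness of $V$ itself I would write $V\simeq V(\mu^{-1})\otimes\mu$; since de Rham representations are stable under tensor product, it suffices to prove that the character $\mu$ is de Rham. I would extract information about $\mu$ from the standard isomorphism $\Schur^u(V)\simeq \Schur^u(V(\mu^{-1}))\otimes\mu^{\otimes n}$, valid for a one-dimensional twist, where $n=u_1+\cdots+u_r$. Write $M=\Schur^u(V(\mu^{-1}))$: it is de Rham, being a Schur power of the de Rham representation $V(\mu^{-1})$, and it is nonzero because $\dim V(\mu^{-1})=\dim V\geq r(u)\geq r$ exceeds the number of rows of $Y_u$. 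As $\Schur^u(V)$ is de Rham by hypothesis, the representation $M^\vee\otimes\bigl(M\otimes\mu^{\otimes n}\bigr)$ is de Rham; since we are in characteristic zero the trivial representation $\mathbf{1}$ splits off $M^\vee\otimes M$ via the trace, so the character $\mu^{\otimes n}$, which as a character is the $n$-th power $\mu^n$, is de Rham.

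The main obstacle is the last step: deducing that $\mu$ is de Rham from the fact that $\mu^n$ is. This is a purely character-theoretic question, and it is precisely the point at which the conclusion ``$V$ is de Rham'' is stronger than the conclusion ``$V(\mu^{-1})$ is de Rham'' furnished by Theorem~\ref{dRschur}. A priori the de Rham-ness of $\mu^n$ only forces the Sen weights of $\mu$ (for each embedding $h:K\to E$) to lie in $\frac{1}{n}\mathbf{Z}$; securing the integrality of those weights, rather than mere membership in $\frac{1}{n}\mathbf{Z}$, is the crux on which the whole strengthening rests. To carry it out I would analyze $\mu$ through Sen's theory and the structure of one-dimensional de Rham characters (locally algebraic up to finite order), and I expect the genuine difficulty, and the need for additional input, to concentrate entirely in this passage from a power of $\mu$ to $\mu$ itself.
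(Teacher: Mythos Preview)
Your first paragraph is exactly what the paper intends: the corollary is stated without proof and is meant to follow immediately from Theorem~\ref{dRschur} via the functor $V\mapsto W(V)$, using $\Schur^u(W(V))\simeq W(\Schur^u(V))$ and $\rank W(V)=\dim_E V$. That already yields ``$V(\mu^{-1})$ is Hodge-Tate'' and, in the de Rham case, ``$V(\mu^{-1})$ is de Rham''.

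The clause ``then $V$ is de Rham'' in the printed corollary is almost certainly a slip for ``then $V(\mu^{-1})$ is de Rham''; compare the parallel Corollary~\ref{dRtensorreps} and the conclusion of Theorem~\ref{dRschur} itself. Your attempt to prove the literal stronger statement cannot be completed, because that statement is false. For $p$ odd take $K=\Qp$, let $\langle\chi\rangle$ be the projection of the cyclotomic character to $1+p\Zp$, and use Lemma~\ref{charwts} to produce, over a finite $F/\Qp$, a character $\mu:G_{\Qp}\to F^\times$ with $\mu^2=\langle\chi\rangle$; then $\mu$ has Sen weight $\tfrac12$. Set $V=F(\mu)\oplus F(\mu)$, so $\dim_F V=2=r((2))$. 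One has $\Sym^2(V)\simeq F(\mu^2)^{\oplus 3}=F(\langle\chi\rangle)^{\oplus 3}$, and $\langle\chi\rangle=\chi\cdot\omega^{-1}$ with $\omega$ of finite order, hence $\langle\chi\rangle$ is de Rham and so is $\Sym^2(V)$. But $V$ has Sen weight $\tfrac12\notin\Z$, so $V$ is not even Hodge-Tate. Thus the step you isolated --- passing from ``$\mu^{n}$ de Rham'' to ``$\mu$ de Rham'' --- is not just the hard point, it is genuinely false in general, and no additional input will rescue it. The correct conclusion, and the one the paper's theorem supports, is that $V(\mu^{-1})$ is de Rham.
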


We now show that the bound on $\rank(W)$ in theorem \ref{dRschur} is optimal.  If $W$ is a $B^{\otimes E}_{|K}$-pair such that $\rank(W)<r(u)$ then $\Schur^u(W)$ is of rank $1$ if $u_1=\ldots =u_r$ and $\Schur^u(W)=0$ otherwise. In the former case, $\rank(W)=r$ and $\Schur^u(W)=\bigotimes_{i=1}^r \det(W)$.  Let $V$ denote a $2$-dimensional $\Qp$-vector space endowed with an action of $G_{\Qp}$ such that $g\in G_{\Qp}$ acts on a basis $\mcl{E}=(e_1,e_2)$ by the matrix \[\left(\begin{array}{cc}1 & \log_p(\chi(g))\\ 0 & 1 \end{array}\right)\]
so that $V$ is not Hodge-Tate since $\Cp\otimes_{\Qp} V = \Cp[\{0\};1]$, but $G_{\Qp}$ acts trivially on $\Lambda^2V$. There is no character $\mu:G_{\Qp}\to E^\times$ such that $V(\mu^{-1})$ is Hodge-Tate; such a character would necessarily have weights in $\mathbf{Z}$, and lemma \ref{dRschurlem1} would imply that $V$ itself is Hodge-Tate.

\section{Semi-stable tensor products and Schur $B$-pairs}

\subsection{Semi-stable $B$-pairs} 
Let $W=(W_{\e}, W_{\dR}^+)$ be a $B_{|K}^{\otimes E}$-pair.  We say that $W$ is \emph{crystalline} if the $\Bcris$-representation
$(\BcrisE{E})\otimes_{\BeE{E}} W_{\e}$ of $G_K$ is trivial.  Similarly, we say that $W$ is \emph{semi-stable} if the $\Bst$-representation $(\BstE{E})\otimes_{\BeE{E}} W_{\e}$ of $G_K$ is trivial.  We say that $W$ is \emph{potentially crystaline} (or \emph{potentially semi-stable}) if there is a finite extension
$L/K$ such that the $B_{|L}^{\otimes E}$-pair $W|_{G_L}$ is crystalline (or semi-stable).  Note that if $V$ is an $E$-linear representation of $G_K$, then $V$ is crystalline (or semi-stable) if and only if the $B^{\otimes E}_{|K}$-pair $W(V)$ is crystalline (or semi-stable).

Let $L/K$ be a finite Galois extension and let $L_0=L\cap\mathbf{Q}_p^{\nr}$. If $W$ is a $B_{|K}^{\otimes E}$-pair which is semi-stable when restricted to $G_L$, then $D_{\st,L}(W)=(\BstE{E}\otimes_{\mathbf{B}_{\e,E}} W_{\e})^{G_L}$ is a free $L_0\otimes_{\Qp} E$-module such that $\rank_{L_0\otimes_{\Qp} E}(D_{\st,L}(W))=\rank(W)$, endowed with an injective additive self-map $\varphi$ that is $E$-linear and semi-linear for the absolute Frobenius automorphism $\sigma$ on $L_0$, an $L_0\otimes_{\Qp} E$-linear nilpotent endomorphism $N$ such that $N\varphi = p\varphi N$, and an $E$-linear and $L_0$-semi-linear action of $\Gal(L/K)$ which commutes with $\varphi$ and $N$.  The following follows from \cite[4.2.6, 5.1.5]{Fon94b}. 

\begin{Prop}\label{sstreps}
Let $W$ be a potentially semi-stable $B_{|K}^{\otimes E}$-pair, semi-stable when restricted to $G_L$ where $L/K$ is finite and Galois. The  $B_{|K}^{\otimes E}$-pair $W$ is semi-stable if and only if the inertia group $I_{L/K}$ acts trivially on $D_{\st,L}(W)$, and $W$ is crystalline if and only if it is semi-stable 
and $N=0$ on $D_{\st,L}(W)$.
\end{Prop}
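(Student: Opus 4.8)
The plan is to reduce everything to the linear-algebra structure of the $(\varphi, N, \Gal(L/K))$-module $D_{\st,L}(W)$, exactly as in Fontaine's treatment of $p$-adic representations in \cite[4.2.6, 5.1.5]{Fon94b}, but phrased at the level of $B$-pairs. Write $M = D_{\st,L}(W)$, a free $L_0 \otimes_{\Qp} E$-module equipped with its Frobenius $\varphi$, monodromy operator $N$ satisfying $N\varphi = p\varphi N$, and semilinear $\Gal(L/K)$-action, and recall that the functor $W \mapsto D_{\st,L}(W)$ is exact and compatible with $\otimes$, duals, and $\oplus$, with $\rank_{L_0 \otimes_{\Qp} E} M = \rank(W)$. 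The first step is to translate the conditions: $W$ is semi-stable (resp. crystalline) if and only if the natural map $(\BstE{E} \otimes_{\BeE{E}} W_{\e})^{G_K} \to M$ identifies $D_{\st,K}(W)$ with an $L_0 \otimes_{\Qp} E$-submodule of $M$ of full rank over $K_0 \otimes_{\Qp} E$ (resp. and $N$ vanishes); equivalently, $\dim D_{\st,K}(W) = \rank(W)$ over $K_0 \otimes_{\Qp} E$.

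Next I would establish the semi-stable criterion. Since $W$ is semi-stable over $G_L$, we have $D_{\st,L}(W) = L_0 \otimes_{K_0} D$ for a suitable $K_0 \otimes_{\Qp} E$-structure $D$ if and only if $\Gal(L/K)$ acts through its semilinear-descent datum in the trivial way on the $L_0$-coefficients — i.e., by Galois descent along $L_0/K_0$, $W$ is semi-stable precisely when $M^{\Gal(L/K)}$ has the full rank $\rank(W)$ over $K_0 \otimes_{\Qp} E$. Now decompose $\Gal(L/K)$: the quotient $\Gal(L/K)/I_{L/K} \cong \Gal(L_0/K_0)$ acts on $M$ semilinearly over $L_0$, and this action always satisfies (normal basis / Hilbert 90) $M = L_0 \otimes_{K_0} M^{\Gal(L_0/K_0)'}$-type descent relative to the residual part, so that the only obstruction to full-rank invariants is the action of the inertia subgroup $I_{L/K}$, which acts $L_0 \otimes_{\Qp} E$-linearly. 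Hence $M^{\Gal(L/K)}$ has full rank over $K_0 \otimes_{\Qp} E$ if and only if $I_{L/K}$ acts trivially on $M$. This gives the first equivalence.

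For the crystalline criterion, suppose $W$ is semi-stable, so by the above $M = L_0 \otimes_{K_0} D_{\st,K}(W)$ with $I_{L/K}$ acting trivially; the operator $N$ on $M$ descends to $N$ on $D_{\st,K}(W)$ since $N$ commutes with the $\Gal(L/K)$-action. Then $W$ is crystalline if and only if the $\Bcris$-representation attached to $W$ is trivial, which by the standard comparison $\Bst = \Bcris[\log t]$ and $N = -d/d(\log t)$ holds if and only if $N = 0$ on $D_{\st,K}(W)$, equivalently (by descent) $N = 0$ on $M = D_{\st,L}(W)$. I would cite \cite[4.2.6, 5.1.5]{Fon94b} for the translation between triviality of the $\Bcris$- versus $\Bst$-representation and the vanishing of $N$, since that is precisely the content of those results.

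The main obstacle is the Galois-descent step: one must be careful that the $\Gal(L/K)$-action on $D_{\st,L}(W)$ is $L_0$-semilinear and not $L_0 \otimes_{\Qp} E$-linear, so invoking Hilbert 90 / the normal basis theorem requires splitting off the inertia (which acts linearly over $L_0$) from the unramified quotient (which acts semilinearly and for which descent is automatic). Once the inertia acts trivially, the unramified descent $M^{\Gal(L/K)} = M^{\Gal(L_0/K_0)}$ has full rank over $K_0 \otimes_{\Qp} E$ by the usual étale descent, and everything else is formal. The compatibility of $D_{\st,L}(-)$ with restriction, which is needed to relate $D_{\st,K}(W)$ and $D_{\st,L}(W)$, is part of the structure recalled just before the proposition in the excerpt, so I would use it freely.
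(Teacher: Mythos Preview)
The paper does not actually supply a proof of this proposition: it simply records, just before the statement, that ``the following follows from \cite[4.2.6, 5.1.5]{Fon94b}.'' Your proposal correctly reconstructs the standard argument underlying those references---splitting the $\Gal(L/K)$-action into the $L_0$-linear inertia part and the $L_0$-semilinear unramified quotient, applying Galois descent along $L_0/K_0$ for the latter, and then reading off the crystalline criterion from $\Bst=\Bcris[\log t]$---so your approach is exactly the one the paper is appealing to by citation.
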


\subsection{Semi-stable tensor products}
\begin{Thm}\label{ssttensors}
Let $W$ and $W'$ be two $B_{|K}^{\otimes E}$-pairs which are potentially semi-stable.  If the $B_{|K}^{\otimes E}$-pair $W\otimes W'$ is semi-stable, then there is a finite extension $F/E$ and a character $\mu:G_K\to F^{\times}$ such that $B^{\otimes F}_{|K}$-pairs $W(\mu^{-1})$ and $W'(\mu)$ are semi-stable. If, moreover, $W\otimes W'$ is crystalline, then so are $W(\mu^{-1})$ and $W'(\mu)$.
\end{Thm}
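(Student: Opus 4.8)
The proof will mirror that of Theorem \ref{dRtensor}, with Fontaine's functor $D_{\st,L}$ playing the role of Sen's functor and the action of inertia on $D_{\st,L}$ playing the role of the Sen weights. Since a $B^{\otimes F}_{|K}$-pair is semi-stable (resp. crystalline) if and only if it becomes so after a finite scalar extension of $E$, I may first enlarge $E$ and assume it is finite Galois over $\Qp$ and contains $K$. As $W$ and $W'$ are potentially semi-stable, choose a finite Galois extension $L/K$ such that $W|_{G_L}$ and $W'|_{G_L}$ are both semi-stable; then $W\otimes W'$ is semi-stable over $L$ as well. Put $D=D_{\st,L}(W)$ and $D'=D_{\st,L}(W')$, free modules over $R=L_0\otimes_{\Qp}E$ carrying the commuting operators $\varphi$ (injective, $\sigma$-semi-linear), $N$ ($R$-linear, nilpotent, with $N\varphi=p\varphi N$) and an $E$-linear, $L_0$-semi-linear action of $\Gal(L/K)$. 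The compatibility of $D_{\st,L}$ with tensor products gives $D_{\st,L}(W\otimes W')=D\otimes_R D'$, with $\varphi=\varphi\otimes\varphi$, $N=N\otimes\Id+\Id\otimes N$ and the diagonal action of $\Gal(L/K)$; by Proposition \ref{sstreps}, the inertia subgroup $I_{L/K}$ acts trivially on $D\otimes_R D'$.

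The heart of the matter is to extract a character from this triviality. Writing $R$ as a finite product of fields and working over each factor, the elementary fact that $A\otimes B=\Id$ forces $A$ and $B$ to be inverse scalars shows that $I_{L/K}$ acts on $D$ through a scalar character $\psi:I_{L/K}\to R^{\times}$ and on $D'$ through $\psi^{-1}$. Since $\varphi$ is $\sigma$-semi-linear, injective and commutes with $I_{L/K}$, one gets $\psi(g)\varphi(x)=\sigma(\psi(g))\varphi(x)$ for all $x$, hence $\psi$ is valued in $R^{\sigma=1}=(L_0\otimes_{\Qp}E)^{\sigma=1}=E$; and since $\Gal(L/K)$ acts $E$-linearly on $D$, for $g\in I_{L/K}$ and $h\in\Gal(L/K)$ one computes $(hgh^{-1})x=h(\psi(g)h^{-1}x)=\psi(g)x$, so $\psi$ is invariant under $\Gal(L/K)$-conjugation. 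The quotient $\Gal(L/K)/I_{L/K}$ is cyclic, so such an invariant character of $I_{L/K}$ extends to a character of $\Gal(L/K)$ (after adjoining one root of a single element of $E^{\times}$, if necessary); let $F/E$ be a finite extension and $\mu:G_K\to F^{\times}$ the inflation to $G_K$ of such an extension of $\psi$. Then $\mu|_{G_L}$ is trivial, so the $B^{\otimes F}_{|K}$-pair $W(F(\mu))$ is crystalline over $L$, and a direct computation on $D_{\st,L}(F(\mu))=L_0\otimes_{\Qp}F$ shows that $I_{L/K}$ acts on it through $\mu|_{I_{L/K}}=\psi$.

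I then conclude as follows. Using the tensor-product compatibility and base change to $F$, $D_{\st,L}(W(\mu^{-1}))=(F\otimes_E D)\otimes_{L_0\otimes_{\Qp}F}D_{\st,L}(F(\mu^{-1}))$, on which $I_{L/K}$ acts by $\psi\cdot\psi^{-1}=\Id$; likewise $I_{L/K}$ acts trivially on $D_{\st,L}(W'(\mu))=(F\otimes_E D')\otimes_{L_0\otimes_{\Qp}F}D_{\st,L}(F(\mu))$. As $W(\mu^{-1})$ and $W'(\mu)$ are semi-stable over $L$, Proposition \ref{sstreps} shows they are semi-stable over $K$. If moreover $W\otimes W'$ is crystalline, then $N=0$ on $D\otimes_R D'$; writing $N=N_D\otimes\Id+\Id\otimes N_{D'}$ with $N_D,N_{D'}$ nilpotent, a vector $v$ with $N_Dv\neq 0$ is necessarily independent from $N_Dv$, which makes the identity $N_Dv\otimes w=-v\otimes N_{D'}w$ impossible, so $N_D=0$, and symmetrically $N_{D'}=0$. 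Since $N$ vanishes on any rank-one $D_{\st}$ and $N_D=N_{D'}=0$, the monodromy operator vanishes on $D_{\st,L}(W(\mu^{-1}))$ and on $D_{\st,L}(W'(\mu))$; combined with the triviality of the inertia action, Proposition \ref{sstreps} shows that $W(\mu^{-1})$ and $W'(\mu)$ are crystalline.

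I expect the main difficulty to be the middle paragraph: recognizing that the inertia action on $D_{\st,L}(W)$ is a scalar character and, above all, checking that this character is $\Gal(L/K)$-invariant and $E$-valued so that it can be realized as (the restriction to inertia of) an honest Galois character $\mu:G_K\to F^{\times}$ with a transparent description of $D_{\st,L}(F(\mu))$. The bookkeeping with the $L_0\otimes_{\Qp}E$-structure and the $\sigma$-semi-linearity of $\varphi$ is the delicate point; the $A\otimes B=\Id$ argument, the nilpotency argument, and the vanishing of $N$ on rank-one modules are routine.
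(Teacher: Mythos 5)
Your proposal is correct and follows essentially the same route as the paper's proof: the tensor compatibility of $D_{\st,L}$, the observation that a trivial inertia action on $D\otimes_R D'$ forces inverse scalar actions on the factors, the use of the $\sigma$-semi-linearity of $\varphi$ to see that the character is $E$-valued, the extension of the character across the cyclic quotient $\Gal(L/K)/I_{L/K}$ after adjoining a single root, and the vanishing of $N\otimes\Id+\Id\otimes N'$ forcing $N=N'=0$ by nilpotence. The only cosmetic differences are that you phrase the scalar-extraction by decomposing $L_0\otimes_{\Qp}E$ into fields where the paper argues with matrices, and you make explicit the conjugation-invariance of the inertia character that the paper uses implicitly.
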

\begin{proof}
Let $L/K$ be a finite Galois extension such that $W$ and $W'$ are semi-stable as $B_{|L}^{\otimes E}$-pairs. By \cite[5.1.7]{Fon94b}, we have an isomorphism of $E-(\varphi,N,\Gal(L/K))$-modules: \[D_{\st,L}(W\otimes W')\liso D_{\st,L}(W)\otimes_{(L_0\otimes_{\Qp} E)} D_{\st,L}(W'). \]

Let $\mcl{E}\subset D_{\st,L}(W)$ and $\mcl{E}'\subset D_{\st,L}(W')$ be $L_0\otimes_{\Qp} E$-bases, so that the set $\mcl{E}\otimes\mcl{E}'$ of elementary tensors is a basis of $D_{\st,L}(W\otimes W')$.  For all $g\in G_K$, let $U_g=\Mat(g|\mcl{E})\in\GL_d(L_0\otimes_{\Qp} E)$ and let 
$U'_g=\Mat(g|\mcl{E}')\in\GL_{d'}(L_0\otimes_{\Qp} E)$. By proposition \ref{sstreps}, $I_{L/K}$ acts trivially on 
$D_{\st,L}(W\otimes_E W')$, and we have $\Mat(g|\mcl{E}\otimes\mcl{E}')=U_g\otimes U'_g = \text{Id}$ for all $g\in I_{L/K}$, so that $U_g = \eta_g\Id$ and $U'_g = \eta^{-1}_g\Id$ with $\eta_g\in (L_0\otimes_{\Qp} E)^\times$. The relation $\varphi g = g\varphi$ on $D_{\st,L}(W)$ translates to the matrix relation $\Mat(\varphi|\mcl{E})\cdot \sigma(U_g) = U_g\cdot g(\Mat(\varphi|\mcl{E}))$ for all 
$g\in\Gal(L/K)$, so that for all $g\in I_{L/K}$, we have $\eta_g \in (L_0\otimes_{\Qp} E)^{\sigma=1}=E$ and therefore $\eta_g \in E^\times$. 

We now show that there is a finite extension $F/E$ such that the character $\eta:I_{L/K}\to E^\times$ can be extended to a character $\mu:\Gal(L/K)\to F^\times$. Let $\omega\in \Gal(L/K)$ be such that its residual image generates the cyclic group 
$\Gal(k_L/k_K)$.  If $g\in \Gal(L/K)$, then we can write $g=g'\omega^i$ for a unique $g'\in I_{L/K}$ and unique 
$0\leq i\leq f-1$, where $f=[k_L:k_K]$. Let $\xi\in\Qpbar$ be an $f^{th}$ root of $\eta(\omega^f)$. Since
$\eta(\omega g' \omega^{-1}) = \eta(g')$ for all $g'\in I_{L/K}$, putting $F=E(\xi)$ and $\mu(g) := \eta(g')\xi^i$ defines a homomorphism 
$\mu:G_K\to F^\times$.

The $B^{\otimes F}_{|K}$-pairs $W(\mu^{-1})$ and $W'(\mu)$ are semi-stable, by proposition \ref{sstreps}.  If, moreover, 
$W\otimes W'$ is crystalline, then the $B^{\otimes F}_{|K}$-pair $W(\mu^{-1})\otimes W'(\mu)$ is crystalline as well
and by the isomorphism of $F-(\varphi,N,\Gal(L/K))$-modules recalled above, we have : 
  $$D_{st,L}(W(\mu^{-1})\otimes W'(\mu))\liso D_{st,L}(W(\mu^{-1}))\otimes_{(L_0\otimes_{\Qp} F)} D_{st,L}(W'(\mu)).$$
The monodromy operator $N\otimes\Id + \Id\otimes N'$ is zero, and therefore the matrices of $N$
and $N'$ are scalar multiples of the identity. Since $N$ and $N'$ are nilpotent, these scalars are necessarily zero 
since $L_0\otimes_{\Qp} F$ is reduced, and thus $W(\mu^{-1})$ and $W'(\mu)$ are crystalline by \ref{sstreps}.
\end{proof}

\begin{Cor}\label{ssttensorreps}
Let $V$ and $V'$ be potentially semi-stable $E$-linear representations of $G_K$. 
If $V\otimes_E V'$ is semi-stable, then there is a finite extension $F/E$ and a character $\mu:G_K\to F^\times$ such that the $F$-linear representations $V(\mu^{-1})$ and $V'(\mu)$ are semi-stable. If, moreover, $V\otimes_E V'$ is crystalline, then so are $V(\mu^{-1})$ and $V'(\mu)$.  
\end{Cor}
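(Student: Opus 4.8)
The plan is to reduce this to Theorem~\ref{ssttensors} by passing through the tensor functor $W(-)$ that embeds the category of $E$-linear representations of $G_K$ into the category of $B^{\otimes E}_{|K}$-pairs. Recall two facts from \S1.2 and \S3.1: the functor $W(-)$ is compatible with tensor products, so $W(V\otimes_E V')\simeq W(V)\otimes W(V')$; and an $E$-linear (more generally, $F$-linear) representation $U$ of $G_K$ is semi-stable, resp. crystalline, if and only if the associated $B$-pair $W(U)$ is. In particular, since $V$ and $V'$ are potentially semi-stable, the $B^{\otimes E}_{|K}$-pairs $W(V)$ and $W(V')$ are potentially semi-stable, and the hypothesis that $V\otimes_E V'$ is semi-stable, resp. crystalline, says exactly that $W(V)\otimes W(V')$ is semi-stable, resp. crystalline.

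Next I would invoke Theorem~\ref{ssttensors} with $W=W(V)$ and $W'=W(V')$. This produces a finite extension $F/E$ and a character $\mu:G_K\to F^\times$ for which the $B^{\otimes F}_{|K}$-pairs $W(V)(\mu^{-1})$ and $W(V')(\mu)$ are semi-stable, and are moreover crystalline when $V\otimes_E V'$ is crystalline. It then remains to recognise these two twisted $B$-pairs as coming from representations. Writing $V(\mu^{-1})$ for the $F$-linear representation $(F\otimes_E V)(\mu^{-1})$, and likewise for $V'(\mu)$, one has canonical isomorphisms $W(V)(\mu^{-1})\simeq W\bigl(V(\mu^{-1})\bigr)$ and $W(V')(\mu)\simeq W\bigl(V'(\mu)\bigr)$ of $B^{\otimes F}_{|K}$-pairs, because extension of coefficients along $F/E$ and twisting by a character both commute with $W(-)$, directly from the definition of $W(-)$ recalled in \S1.2. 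Applying once more the equivalence ``semi-stable (resp. crystalline) for an $F$-linear representation $\Longleftrightarrow$ semi-stable (resp. crystalline) for the associated $B$-pair'', I conclude that $V(\mu^{-1})$ and $V'(\mu)$ are semi-stable, resp. crystalline, which is the assertion of the corollary.

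I do not anticipate any genuine difficulty: all the substance is in Theorem~\ref{ssttensors}, and the only thing this corollary requires beyond it is the purely formal compatibility of the functor $W(-)$ with tensor products, with coefficient extension, and with character twists, together with the already-recorded fact that $W(-)$ detects semi-stability and crystallinity. The same argument with ``Hodge-Tate'' and ``de Rham'' in place of ``semi-stable'' and ``crystalline'', and Theorem~\ref{dRtensor} in place of Theorem~\ref{ssttensors}, yields Corollary~\ref{dRtensorreps}, so the two corollaries could equally well be presented in parallel.
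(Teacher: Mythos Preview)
Your proposal is correct and is exactly the intended argument: the paper gives no separate proof of this corollary, presenting it as an immediate consequence of Theorem~\ref{ssttensors} via the functor $W(-)$ and the compatibilities recorded in \S1.2 and \S3.1. Your write-up makes explicit precisely those formal compatibilities (tensor product, coefficient extension, character twist, and detection of semi-stability/crystallinity) that the paper leaves implicit.
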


\subsection{Semi-stable Schur representations}

In this paragraph, $n\geq 1$ is an integer and $u=(u_1,\ldots, u_r)$ denotes an integer partition $n=u_1+\ldots+u_r$ such that $u_i\geq u_{i+1}\geq 1$.

\begin{Lem}\label{lemabove}
Let $L/K$ be a finite Galois extension and let $D$ be an $E\dash(\varphi,N,\Gal(L/K))$-module such that $\rank(D)\geq r(u)$.  If $I_{L/K}$ acts trivially on $\Schur^u(D)$, then $I_{L/K}$ acts on $D$ via a character $\mu:I_{L/K}\to E^\times$.  If $N=0$ on $\Schur^u(D)$, then $N=0$ on $D$.
\end{Lem}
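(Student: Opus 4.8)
Both assertions are proved independently, by reduction to linear algebra over the étale $\Qp$-algebra $R=L_0\otimes_{\Qp}E$, which is a finite product $\prod_i F_i$ of finite extensions of $\Qp$. An element of $I_{L/K}$ fixes the residue field $k_L$, and since $L_0/\Qp$ is unramified this means it acts trivially on $L_0$; hence $I_{L/K}$ acts $R$-linearly on $D$, and $N$ is $R$-linear by definition. Fix an $R$-basis $\mcl{E}$ of $D$ and set $U_g=\Mat(g|\mcl{E})\in\GL_d(R)$ for $g\in I_{L/K}$, where $d=\rank(D)$; the hypothesis on $I_{L/K}$ becomes $\Schur^u(U_g)=\Id$ for all $g$. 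Likewise the monodromy operator induced on $\Schur^u(D)$ is the one attached to $N$ by the Leibniz rule --- compatibly with $\Schur^u$ being a tensor functor on $E\dash(\varphi,N,\Gal(L/K))$-modules and with the rule $N\otimes\Id+\Id\otimes N'$ on tensor products --- so the hypothesis on $N$ says this Leibniz operator vanishes. Working factor by factor and extending scalars to $\ol{F_i}$, which affects neither the hypotheses nor the conclusions, it suffices to prove, for an algebraically closed field $k$ of characteristic zero and a $k$-vector space $M$ with $\dim_kM=d\geq r(u)$:
\begin{itemize}
\item[(a)] if $N\in\operatorname{End}_k(M)$ is nilpotent and the Leibniz operator it induces on $\Schur^u(M)$ is zero, then $N=0$;
\item[(b)] if $g\in\GL(M)$ acts as the identity on $\Schur^u(M)$, then $g$ is a scalar.
\end{itemize}

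I would deduce (b) from (a) as follows. If $a_1,\ldots,a_d$ are the eigenvalues of $g$, then those of $\Schur^u(g)$ are the products $\prod_{i,j}a_{t_{ij}}$ over tableaux $T=(t_{ij})$ on $Y_u$ with entries in $\{1,\ldots,d\}$ (the multiplicative analogue of Corollary \ref{Schurweights}; reduce to $g$ diagonalisable). These all equal $1$; applying this to the tableaux $T_1,\ldots,T_d$ of paragraph \ref{schurnotation} --- the point at which $d\geq r(u)$ is used --- forces $a_1=\cdots=a_d=:a$ with $a^n=1$. Then $a^{-1}g=\exp(\nu)$ for a nilpotent $\nu$, and $t\mapsto\Schur^u(\exp(t\nu))$ is a one-parameter subgroup of $\GL(\Schur^u(M))$ whose generator is the Leibniz operator of $\nu$; thus $\Id=\Schur^u(g)=\Schur^u(\exp\nu)=\exp(\Schur^u\nu)$, so the Leibniz operator of $\nu$ vanishes, and (a) gives $\nu=0$, i.e.\ $g=a\Id$.

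The crux, which I expect to be the main obstacle, is (a). The plan: supposing $N\neq0$, choose a Jordan basis $(f_1,\ldots,f_d)$ of $M$ for $N$, ordered so that $Nf_i\in\{0,f_{i-1}\}$ for every $i$ and so that $f_1,f_2$ lie in one Jordan block with $Nf_2=f_1$. Realise $\Schur^u(M)$ inside $\Lambda^{v_1}(M)\otimes\cdots\otimes\Lambda^{v_{u_1}}(M)$ with $N$ acting by the Leibniz rule, and consider the tableau $\hat T$ obtained from the standard tableau $T_1$ (which has $i$ throughout its $i$-th row) by replacing its last column --- which in $T_1$ reads $1,2,\ldots,c$ downward, with $c=v_{u_1}$ the length of the last column --- by $2,3,\ldots,c+1$. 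Checking that $\hat T$ is a genuine tableau with entries in $\{1,\ldots,d\}$ is routine except for the one point that it requires the value $c+1$, which is available precisely because $c+1\leq r(u)\leq d$ (this is exactly where the distinction $r(u)=r+1$ for rectangles versus $r(u)=r$ otherwise matters). A Leibniz computation then yields $N(e_{\hat T})=e_{T'}$, with $T'$ obtained from $T_1$ by replacing its last column by $1,3,4,\ldots,c+1$: the unchanged columns contribute $0$ (each is $f_1\wedge\cdots\wedge f_{v_j}$, and Leibniz $N$ applied to it produces only wedges containing a repeated basis vector), while in the last column only the term arising from $Nf_2=f_1$ survives, for the same reason. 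Since $T'$ is again a genuine tableau with entries in $\{1,\ldots,d\}$, $e_{T'}$ is a basis vector of $\Schur^u(M)$, hence nonzero, contradicting the hypothesis; so $N=0$.

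Granting (a) and (b): each $U_g=\eta_g\Id$, so $g\mapsto\eta_g$ is a homomorphism $I_{L/K}\to R^\times$, and --- as in the proof of Theorem \ref{ssttensors}, using $\varphi g=g\varphi$ and the $\sigma$-semilinearity of $\varphi$ --- one finds $\eta_g\in R^{\sigma=1}=E$, so $\mu:=\eta\colon I_{L/K}\to E^\times$ is a character through which the $I_{L/K}$-action on $D$ factors. For the second assertion, (a) applied over each $\ol{F_i}$ gives $N=0$ on $D$. The only part I would write out in full detail is the Leibniz evaluation of $N(e_{\hat T})$ in (a), together with the verification that $\hat T$ and $T'$ are genuine tableaux; the rest is bookkeeping.
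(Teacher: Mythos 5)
Your proposal is correct and, at its core, is the paper's own argument: the same decomposition of $D$ over $L_0\otimes_{\Qp}E$ into $h$-factors on which $I_{L/K}$ acts linearly, the same use of the chain of tableaux $T_1,\ldots,T_d$ (and the bound $d\geq r(u)$) to force all eigenvalues to coincide, the same modified standard tableau $\hat T$ with the Leibniz computation $N(e_{\hat T})=e_{T'}$ for the monodromy statement, and the same descent of $\eta_g$ to $E^\times$ via $\varphi g=g\varphi$. The one genuine divergence is your deduction of the scalar-action statement (b) from the nilpotent statement (a): you pass to the unipotent part $a^{-1}g=\exp(\nu)$ and kill $\nu$ by applying (a) to its Leibniz operator, whereas the paper exploits that each $g\in I_{L/K}$ has finite order, hence is already diagonalizable over $\Qpbar$, and reads off equality of eigenvalues directly from $\Schur^u(g)=\Id$ on the eigenbasis $(e_{T,h}^g)$. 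Your route is valid and proves the slightly stronger statement that \emph{any} $g\in\GL(M)$ with $\Schur^u(g)=\Id$ is scalar (no finite-order hypothesis), at the cost of an extra exponential/logarithm step and of having to justify the eigenvalue formula for non-diagonalizable $g$; in the application both hypotheses are available, so the paper's direct diagonalization is the shorter path.
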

\begin{proof} 
By extending scalars if necessary, we may suppose that $E\supset L$.  We have an isomorphism of rings, 
$L_0\otimes_{\Qp} E\riso \bigoplus_{h:L_0\to\Qpbar} E$
on which $I_{L/K}$ acts trivially on both sides since $L_0/\Qp$ is Galois.  We therefore see that $D$ decomposes as an $E$-linear representation of $I_{L/K}$ into $D\simeq \bigoplus_h D_h$ where $D_h$ is the $E$-linear representation of $I_{L/K}$ coming from the $h$-factor map $(\lambda,e)\mapsto h(\lambda)e:L_0\otimes_{\Qp} E\to E$. The corresponding decomposition of $\Schur^u(D)$ is given by $\Schur^u(D)\simeq \bigoplus_h\Schur^u(D_h)$, and by assumption $I_{L/K}$ acts trivially on each $E$-linear representation $\Schur^u(D_h)$.  Let $I_{L/K}$ act $\Qpbar$-linearly on $\ol{D}_h=\Qpbar\otimes_E D_h$.  Let $g\in I_{L/K}$. Since $I_{L/K}$ is finite, there is a $\Qpbar$-basis $\mcl{E}^g_h=(e_{1,h}^g,\ldots, e_{d,h}^g)$ of $\ol{D}_h$ and elements $\lambda_{1,h}^g,\ldots, \lambda_{d,h}^g\in\Qpbar$ such that $g(e_{i,h}^g)=\lambda_{i,h}^ge_{i,h}^g$ for all $i\in\{1,\ldots, d\}$.  Consider the $\Qpbar$-basis of $\Schur^u(\ol{D}_h)$ consisting of elements $e_{T,h}^g$, where $T$ ranges over all tableaux on $Y_u$. One has $g(e_{T,h}^g)=\lambda_{T,h}^ge_{T,h}^g$, where $\lambda_{T,h}=\prod_{i=1}^d \lambda_{i,h}^{m_T(i)}$ and $m_T(i)$ denotes the number of times that $i$ appears in the tableau $T$. Since $\dim_{\Qpbar} \ol{D}_h = \rank(D) \geq r(u)$, one sees that $\lambda_{1,h}^g=\lambda_{2,h}^g=\ldots=\lambda_{d,h}^g=\lambda^g$ by considering the tableaux $T_1,\ldots, T_d$ as in paragraph \ref{schurnotation}, so that $g(z)=\lambda^g_hz$ for all $z\in \ol{D}_h$.  Note that we necessarily have $\lambda^g_h\in E$.  We therefore see that for each embedding $h:L_0\to E$, $I_{L/K}$ acts on $\ol{D}_h$ by a character $\mu_h:I_{L/K}\to E^\times$, which translates to saying that $I_{L/K}$ acts on $\ol{D}$ by a character $\mu:I_{L/K}\to (L_0\otimes_{\Qp} E)^\times$. Since $\varphi g = g\varphi$ for all $g\in I_{L/K}$ and $(L_0\otimes_{\Qp} E)^{\sigma=1}=E$, we see that $\mu:I_{L/K}\to E^\times$.

Moreover, since $N$ is an $(L_0\otimes_{\Qp} E)$-linear map, the factors in the decomposition $D\simeq \bigoplus_h D_h$ are $N$-stable. We let $N$ again denote the $E$-linear nilpotent map induced on $D_h$. Since $N=0$ on $\Schur^u(\ol{D})=\bigoplus_h \Schur^u(\ol{D}_h)$, we see that $N=0$ on $\Schur^u(\ol{D}_h)$ for each embedding $h:L_0\to \Qpbar$.  Let $(e'_{1,h},\ldots, e'_{d,h})$ denote a Jordan canonical basis for $N$ on $\ol{D}_h$.  Suppose that $N\neq 0$, so that we may suppose $N(e'_{2,h}) =e'_{1,h}$. If $T$ is the tableau on $Y_u$ in which $i$ appears in all boxes of the $i$-th row, except in the right most column where $i+1$ appears, then a calculation shows that $N(e_{T,h})=e_{T',h}$, where $T'$ is another tableau, therefore contradicting the fact that $N=0$ on $\ol{D}_h$. We therefore see that $N=0$ on each $\ol{D}_h$, so that $N=0$ on $\ol{D}$ and thus $N=0$ on $D$.
\end{proof}

\begin{Thm}\label{sstschur}
Let $W$ be a potentially semi-stable $B^{\otimes E}_{|K}$-pair such that $\rank(W)\geq r(u)$.  If the $B^{\otimes E}_{|K}$-pair $\Schur^u(W)$ is semi-stable, then there is a finite extension $F/E$ and a character $\mu:G_K\to F^\times$ such that the $B^{\otimes F}_{|K}$-pair $W(\mu^{-1})$ is semi-stable.  If, moreover, $\Schur^u(W)$ is crystalline, then so is $W(\mu^{-1})$.
\end{Thm}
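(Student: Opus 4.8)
The plan is to run the same argument as in Theorem \ref{dRschur}, but replacing Sen/de Rham weights with the structure of the $(\varphi,N,\Gal(L/K))$-module $D_{\st,L}$, and replacing Lemma \ref{dRschurlem1} with Lemma \ref{lemabove}. First I would fix a finite Galois extension $L/K$ such that $W|_{G_L}$ is semi-stable, and set $D = D_{\st,L}(W)$, an $E$-$(\varphi,N,\Gal(L/K))$-module of rank $d = \rank(W) \geq r(u)$. The compatibility of $D_{\st,L}$ with Schur functors — which follows, exactly as the tensor-product compatibility \cite[5.1.7]{Fon94b} was used in Theorem \ref{ssttensors}, from the fact that $\Schur^u$ is built out of exterior powers and tensor products — gives an isomorphism of $E$-$(\varphi,N,\Gal(L/K))$-modules $D_{\st,L}(\Schur^u(W)) \simeq \Schur^u(D_{\st,L}(W)) = \Schur^u(D)$. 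Since $\Schur^u(W)$ is semi-stable, Proposition \ref{sstreps} says $I_{L/K}$ acts trivially on $\Schur^u(D)$; since $\rank(D) \geq r(u)$, Lemma \ref{lemabove} then gives a character $\eta : I_{L/K} \to E^\times$ through which $I_{L/K}$ acts on $D$.

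Next I would extend $\eta$ to a character $\mu : \Gal(L/K) \to F^\times$ over a suitable finite extension $F/E$, by exactly the construction in the proof of Theorem \ref{ssttensors}: pick $\omega \in \Gal(L/K)$ whose residual image generates $\Gal(k_L/k_K)$, choose $\xi \in \Qpbar$ an $f$-th root of $\eta(\omega^f)$ where $f = [k_L:k_K]$, set $F = E(\xi)$, and define $\mu(g'\omega^i) := \eta(g')\xi^i$; this is well-defined and a homomorphism because $\eta$ is $\Gal(L/K)$-conjugation-invariant on $I_{L/K}$. Then $W(\mu^{-1})|_{G_L}$ is semi-stable with $D_{\st,L}(W(\mu^{-1})) = D \otimes \mu^{-1}$, on which $I_{L/K}$ now acts trivially, so by Proposition \ref{sstreps} the $B^{\otimes F}_{|K}$-pair $W(\mu^{-1})$ is semi-stable. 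For the crystalline refinement, suppose $\Schur^u(W)$ is crystalline. Then $\Schur^u(W(\mu^{-1})) = \Schur^u(W)(\mu^{-n})$ (where $n = |u|$) is crystalline as a twist of a crystalline pair by a character which, being crystalline itself — its associated $D_{\st}$ is a rank-one $(\varphi,N)$-module with $N = 0$ — preserves crystallinity; hence $N = 0$ on $D_{\st,L}(\Schur^u(W(\mu^{-1}))) = \Schur^u(D_{\st,L}(W(\mu^{-1})))$, and Lemma \ref{lemabove} forces $N = 0$ on $D_{\st,L}(W(\mu^{-1}))$, so $W(\mu^{-1})$ is crystalline by Proposition \ref{sstreps}.

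The main obstacle is making precise the compatibility of $D_{\st,L}$ with the Schur functor and the resulting identification of the $I_{L/K}$-action and the monodromy operator $N$ on $\Schur^u(D)$ with those of Lemma \ref{lemabove}. The tensor compatibility is cited from \cite[5.1.7]{Fon94b}, and $\Schur^u(-)$ is a functorial direct summand of an iterated tensor/exterior power construction, so $D_{\st,L}(\Schur^u(W)) \simeq \Schur^u(D_{\st,L}(W))$ follows formally once one checks that $D_{\st,L}$ is an exact additive tensor functor on the relevant subcategory — this is routine but should be stated. The slightly delicate point, already handled inside Lemma \ref{lemabove}, is that $N$ on $\Schur^u(D)$ is indeed the canonical derivation $N \otimes \Id + \cdots$ induced by Leibniz, so that triviality of $N$ on $\Schur^u(D)$ genuinely propagates back to $D$; but this is exactly what Lemma \ref{lemabove} asserts, so here it suffices to invoke it. One should also remember to reduce to the case $E$ finite Galois containing $K$ at the outset, as in the other proofs, though in fact for this theorem only the existence of $L$ and the statement of Lemma \ref{lemabove} are needed, so even that reduction is optional.
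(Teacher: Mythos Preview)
Your argument for the semi-stable part is essentially identical to the paper's: fix $L$, use the Schur compatibility of $D_{\st,L}$, apply Proposition \ref{sstreps} and Lemma \ref{lemabove} to get $\eta$, extend $\eta$ to $\mu$ exactly as in Theorem \ref{ssttensors}, and conclude via Proposition \ref{sstreps}. That part is fine.

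The crystalline refinement, however, contains a flawed justification. You assert that $\mu^{-n}$ is crystalline because ``its associated $D_{\st}$ is a rank-one $(\varphi,N)$-module with $N=0$''. But $N=0$ on a rank-one module is automatic (nilpotent on a line is zero); this does \emph{not} show the character is crystalline, since crystalline means semi-stable \emph{and} $N=0$, and you have not argued semi-stability of $\mu^{-n}$. In general $\mu$ is ramified (that was the whole point of constructing it), so $\mu$ is not crystalline. Your conclusion that $\mu^{-n}$ is crystalline happens to be salvageable: since $I_{L/K}$ acts trivially on $\Schur^u(D)$ and via $\eta^n$ on each basis vector $e_T$ (every tableau has $n$ boxes), one gets $\eta^n=1$, hence $\mu^n$ factors through $\Gal(k_L/k_K)$ and is unramified, hence crystalline. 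But this is not the argument you wrote.

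The paper avoids this detour entirely. It applies the $N$-part of Lemma \ref{lemabove} directly to $D=D_{\st,L}(W)$: since $\Schur^u(W)$ is crystalline, $N=0$ on $\Schur^u(D)$, so $N=0$ on $D$. Because $\mu$ factors through $\Gal(L/K)$, the twist $W(\mu^{-1})$ restricted to $G_L$ is just $W|_{G_L}\otimes_E F$, so $D_{\st,L}(W(\mu^{-1}))$ is $D\otimes_E F$ with the same $N$ and only the $\Gal(L/K)$-action twisted; hence $N=0$ there too, and Proposition \ref{sstreps} gives crystallinity. This is shorter and does not require analyzing $\mu^{-n}$ at all.
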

\begin{proof}
Let $L/K$ be a finite Galois extension such that $W$ is semi-stable as a $B^{\otimes E}_{|L}$-pair, so that \cite[5.1.7]{Fon94b} implies that we have an isomorphism of $E\dash(\varphi,N,\Gal(L/K))$-modules \[\Schur^u(D_{\st,L}(W))\riso D_{\st,L}(\Schur^u(W))\] 

If $\Schur^u(W)$ is semi-stable, then proposition \ref{sstreps} implies that $I_{L/K}$ acts trivially on $\Schur^u(D_{\st,L}(W))$.  Lemma \ref{lemabove} implies that $I_{L/K}$ acts on $D_{\st,L}(W)$ via a character $\eta:I_{L/K}\to E^{\times}$.  By the same reasoning as in the proof theorem \ref{ssttensors}, there is a finite extension $F/E$ and a character $\mu:\Gal(L/K)\to F^\times$ such that $\mu|_{I_{L/K}}=\eta$.  By proposition \ref{sstreps}, $W(\mu^{-1})$ is semi-stable. 

If $\Schur^u(W)$ is crystalline, then one also has that $N=0$ on $\Schur^u(D_{\st,L}(W))$.  Lemma \ref{lemabove} implies that $N=0$ on $D_{\st,L}(W)$, which implies the same for $D_{\st,L}(W(\mu^{-1}))$, so that $W(\mu^{-1})$ is crystalline.
\end{proof}

Theorem \ref{sstschur} implies the following.

\begin{Cor}
Let $V$ be a potentially semi-stable $E$-linear representation of $G_K$ such that $\dim_E V\geq r(u)$.  If the $E$-linear representation $\Schur^u(V)$ of $G_K$ is semi-stable, then there is a finite extension $F/E$ and a character $\mu:G_K\to F^\times$ such that the $F$-linear representation $V(\mu^{-1})$ of $G_K$ is semi-stable. If, moreover, $\Schur^u(V)$ is crystalline, then so is $V(\mu^{-1})$.
\end{Cor}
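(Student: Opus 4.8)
The plan is to deduce the corollary directly from Theorem \ref{sstschur} by transporting everything to the associated $B$-pair. First I would recall two formal facts from §1: the functor $W(-)$ sending an $E$-linear representation of $G_K$ to a $B^{\otimes E}_{|K}$-pair commutes with Schur functors, so that $\Schur^u(W(V))$ is canonically isomorphic to $W(\Schur^u(V))$; and it preserves ranks, so that $\rank(W(V)) = \dim_E V$. Moreover $V$ is potentially semi-stable if and only if $W(V)$ is, so the hypotheses of the corollary translate into: $W(V)$ is a potentially semi-stable $B^{\otimes E}_{|K}$-pair with $\rank(W(V)) = \dim_E V \geq r(u)$.

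Next, if $\Schur^u(V)$ is semi-stable, then (again using that $V$ semi-stable $\Leftrightarrow$ $W(V)$ semi-stable) the $B^{\otimes E}_{|K}$-pair $W(\Schur^u(V)) \cong \Schur^u(W(V))$ is semi-stable. Applying Theorem \ref{sstschur} to $W(V)$ then yields a finite extension $F/E$ and a character $\mu:G_K\to F^\times$ such that the $B^{\otimes F}_{|K}$-pair $W(V)(\mu^{-1})$ is semi-stable. It remains to identify $W(V)(\mu^{-1})$, computed after scalar extension along $E\hookrightarrow F$, with $W(V(\mu^{-1}))$ where $V(\mu^{-1})$ is the $F$-linear representation $(F\otimes_E V)\otimes_F F(\mu^{-1})$: this is just the compatibility of $W(-)$ with scalar extension ($F\otimes_E W(V)\cong W(F\otimes_E V)$) and with twisting by a character. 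Since an $F$-linear representation of $G_K$ is semi-stable if and only if its associated $B^{\otimes F}_{|K}$-pair is, we conclude that $V(\mu^{-1})$ is semi-stable.

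For the crystalline assertion the same chain of implications applies verbatim: if $\Schur^u(V)$ is crystalline, then $\Schur^u(W(V))$ is a crystalline $B$-pair, the "moreover" clause of Theorem \ref{sstschur} gives that $W(V)(\mu^{-1})\cong W(V(\mu^{-1}))$ is crystalline, and hence $V(\mu^{-1})$ is crystalline. I do not expect any genuine obstacle here — the statement is essentially a dictionary translation of Theorem \ref{sstschur} through the fully faithful tensor functor $W(-)$. The only points needing a line of care are the bookkeeping of the scalar extension from $E$ to $F$ and the verification that twisting by $\mu$ and forming $\Schur^u$ commute with $W(-)$, both of which are formal and already recorded earlier in the paper.
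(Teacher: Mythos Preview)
Your proposal is correct and matches the paper's approach exactly: the paper simply writes ``Theorem \ref{sstschur} implies the following'' and gives no further argument, so the corollary is intended as an immediate translation through the functor $W(-)$, precisely as you outline. The compatibilities you invoke (of $W(-)$ with $\Schur^u$, ranks, scalar extension, twists, and the crystalline/semi-stable conditions) are all recorded in \S1.2 and \S1.4 and \S3.1, so nothing is missing.
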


\renewcommand\refname{References}

\end{document}